\documentclass[18pt,letterpaper]{article}
\usepackage[utf8]{inputenc}

\usepackage[english]{babel}

\usepackage{amsmath}
\usepackage{graphicx}
\usepackage[colorlinks=true, allcolors=blue]{hyperref}
\usepackage{amsfonts,amsmath,amssymb}
\usepackage{amsthm}
\usepackage{amstext}
\usepackage{graphicx}
\usepackage{mathtools}
\usepackage{enumitem}
\usepackage{float}
\usepackage{array}
\usepackage{latexsym}
\usepackage{hyperref,color,authblk,soul}
\usepackage{tikz}
\usepackage{caption}
\usepackage{subcaption}

\tikzstyle{vert}=[shape=circle,draw=black,fill=white, inner sep=.5mm]

\newtheorem{theorem}{Theorem}[section]
\newtheorem{definition}{Definition}[section]
\newtheorem{corollary}{Corollary}[section]

\newtheorem{remark}{Remark}[section]
\newtheorem{conjecture}[theorem]{Conjecture}

\begin{document}

\title{On the existence of $(r,g,\chi)$-cages
\thanks{Research supported by PAPIIT-M\'exico IN108121, Intercambio Acad\'emico, UNAM. Grant: "Buscando ciclos largos en gr\'aficas bipartitas" and CONACyT-M\'exico 282280.}}

\author{Gabriela Araujo-Pardo\\
Instituto de Matematicas. Universidad Nacional Aut\'onoma de M\'exico, M\'exico.\\
garaujo@im.unam.mx

\and
Zhanar Berikkyzy\\
Mathematics Department. Fairfield University \\
zberikkyzy@fairfield.edu

\and
Linda Lesniak\\
Department of Mathematics. Western Michigan University \\
linda.lesniak@wmich.edu}

\maketitle


\begin{abstract}
For integers $r \geq 2$ and $g \geq 3 $, an \emph{$(r,g)$-graph} is an $r$-regular graph with girth $g$, and an \emph{$(r,g)$-cage} is an $(r,g)$-graph of minimum order.  It is conjectured that all $(r,g)$-cages with even $g$ are bipartite, that is, have chromatic number 2.  Here we introduce the idea of an $(r,g,\chi)$-graph, an $r$-regular graph with girth $g$ and chromatic number $\chi$.  We investigate the existence of such graphs and study in detail the $(r,3,3)$-graphs of minimum order.  We also consider $(r,g,\chi)$-graphs for which there is a $\chi$-coloring where the color classes differ by at most 1.
\end{abstract}

\section{Introduction}

In this paper, we work with simple and finite graphs. We study a generalization of the \emph{Cage Problem}, which has been widely studied since cages were introduced by Tutte \cite{T47} in 1947 and after Erd\" os and Sachs \cite{ES63} proved their existence in 1963. An \emph{$(r,g)$-graph} is an $r$-regular graph in which the shortest cycle has length equal to $g$; that is, it is an $r$-regular graph with girth $g$. An \emph{$(r,g)$-cage} is an $(r,g)$-graph with the smallest possible number of vertices among all $(r,g)$-graphs; the order of an $(r,g)$-cage is denoted by $n(r,g)$. The Cage Problem consists of finding $(r,g)$-cages; it is well-known that $(r,g)$-cages have been determined only for very limited sets of parameter pairs $(r, g)$. There exists a simple lower bound for $n(r,g)$, given by Moore (see \cite{EJ13}), and denoted by $n_0(r,g)$. The cages that attain this bound are called \emph{Moore cages}.  








The existence of cages for all such integers $r,g$ has been proved, but very few of these cages
have been determined.  However, based on these known cages, an interesting conjecture has been made (this conjecture appears in \cite{PBMO04}, but we can say that it is a \emph{ folklore Conjecture}, because it is very well-known among people interested on the Cage Problem).

\begin{conjecture}\label{cbipartite}
    
For even values of girth $g$, all $(r,g)$-cages are bipartite.

\end{conjecture}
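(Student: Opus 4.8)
This is a long-standing open conjecture, so what follows is a strategy of attack rather than a complete proof, and I will indicate where it stalls. Since a graph is bipartite precisely when it has no odd cycle, and since an $(r,g)$-graph with $g$ even has shortest cycle of even length $g$, the entire content of the conjecture is that a \emph{minimum-order} such graph cannot contain an odd cycle, which would necessarily have length $\geq g+1$. The non-extremal statement is plainly false (one can build $r$-regular even-girth graphs harboring odd cycles), so any proof must use minimality in an essential way.

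The first approach I would try is an exchange/surgery argument. Suppose $G$ is an $(r,g)$-cage with $g$ even containing an odd cycle, and let $C$ be a shortest odd cycle, of length $\ell \geq g+1$. The aim is to re-route or re-glue edges along $C$ so as to destroy this odd cycle while preserving $r$-regularity and creating no cycle shorter than $g$, thereby producing a bipartite $(r,g)$-graph of order at most $n(r,g)$; a strictly smaller order would contradict minimality, while equality would at least reduce the problem to exhibiting one bipartite cage. A natural auxiliary object is the bipartite double cover $\tilde G = G \times K_2$: one checks that $\tilde G$ is $r$-regular, bipartite, and, because each odd cycle of $G$ lifts to an even cycle of twice its length $\geq 2(g+1) > g$ while each even cycle lifts to cycles of the same length, still has girth exactly $g$. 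Thus bipartite $(r,g)$-graphs always exist on $2\,n(r,g)$ vertices; the difficulty is that the double cover doubles the order and so by itself says nothing about the cage.

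A second, counting-based approach is to exploit the local tree structure forced by extremality. Around any vertex the breadth-first layers of a near-Moore graph must agree with the infinite $r$-regular tree down to depth roughly $g/2$, which forces all \emph{short} closed walks to be even. I would try to push a parity argument outward by analyzing the distribution of cycle lengths through the spectrum of the adjacency matrix, using the fact that a connected $r$-regular graph is bipartite iff $-r$ is an eigenvalue, and attempting to show via interlacing or a trace estimate on closed walks that any non-bipartite extremal graph would be forced to carry too many vertices.

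The main obstacle — and the reason this has remained a folklore conjecture — is that minimality supplies no local certificate ruling out \emph{long} odd cycles: an odd cycle of length $\ell \gg g$ interacts with the girth constraint only globally, so neither local surgery nor local counting gains any purchase on it. Every known verification (for instance $g=4$, where the cage is $K_{r,r}$, and the generalized-polygon cages at $g=6,8,12$) proceeds by first determining the cage explicitly and then observing bipartiteness, rather than by an argument uniform in $r$. I therefore expect that a proof in full generality would require a structural classification of even-girth cages that is presently out of reach, and that the realistic near-term target is to confirm the conjecture for further individual families where the extremal graphs can be pinned down.
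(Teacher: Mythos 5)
The statement you were asked about is not proved in the paper at all: it appears there as a conjecture (a ``folklore'' conjecture, attributed to the cage community via the reference to Pisanski et al.), and everywhere it is invoked later — notably in the existence result for $(r,g,3)$-graphs — it is used only as a hypothesis. So there is no proof of the paper's to compare yours against, and your central judgment, that the statement is open and that any proof must use minimality in an essential way that no known local argument supplies, is exactly the correct assessment. Your supporting observations are also sound: the bipartite double cover is indeed an $r$-regular bipartite graph of girth $g$ on $2\,n(r,g)$ vertices (which shows bipartite $(r,g)$-graphs exist but says nothing about whether a cage is bipartite), and the verified cases ($g=4$ with $K_{r,r}$, and the generalized-polygon cages at $g=6,8,12$) all proceed by first pinning down the extremal graph. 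The one caution is presentational: neither the surgery sketch nor the spectral sketch constitutes partial progress — as you yourself note, a long odd cycle interacts with the girth bound only globally, so re-routing edges along it gives no control over newly created short cycles, and the eigenvalue criterion ($-r$ an eigenvalue iff bipartite, for connected $r$-regular graphs) has no known quantitative link to minimum order. Your text is therefore an accurate status report on an open problem rather than a proof, and it should be labeled as such; it is consistent with, and adds nothing to or subtracts nothing from, the paper's own treatment of the statement as an unproven conjecture.
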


This conjecture naturally leads to a relationship between $(r,g)$-cages and chromatic number since another way of stating this conjecture is the following:  

\begin{conjecture}\label{chromaticnumber2}(Conjecture \ref{cbipartite})
    
For even values of girth $g$, all $(r,g)$-cages have chromatic number equal to 2.

\end{conjecture}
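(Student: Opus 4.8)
The plan is to recognize that Conjecture \ref{chromaticnumber2} is presented as a restatement of Conjecture \ref{cbipartite} (the parenthetical ``(Conjecture \ref{cbipartite})'' signals this), so what genuinely needs justification here is the \emph{equivalence} of the two formulations, not the underlying assertion, which is itself the open folklore conjecture. The single ingredient I would invoke is the classical characterization of bipartiteness: a graph $G$ is bipartite if and only if $\chi(G) \le 2$, together with the elementary observation that $\chi(G) = 1$ holds precisely when $G$ has no edges.

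First I would note that every $(r,g)$-cage is $r$-regular with $r \ge 2$ and has girth $g \ge 3$, so in particular it contains at least one edge (indeed at least one cycle), and therefore $\chi(G) \ge 2$. Combining this lower bound with the characterization above gives, for any $(r,g)$-cage $G$, the chain of equivalences
\[
G \text{ is bipartite} \iff \chi(G) \le 2 \iff \chi(G) = 2,
\]
where the final step is exactly where the bound $\chi(G) \ge 2$ is used to upgrade ``$\le 2$'' to ``$= 2$''.

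Applying this equivalence uniformly across the entire family of $(r,g)$-cages with $g$ even, I would then conclude that the statement ``all such cages are bipartite'' is true if and only if ``all such cages have chromatic number exactly $2$'' is true. This is precisely the claim that the two conjectures coincide, establishing the reformulation.

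I do not anticipate any real obstacle in the equivalence argument, since it relies only on standard facts about $2$-colorings and on the trivial fact that a regular graph of positive degree has an edge. It is worth being explicit, however, that this reformulation does \emph{not} resolve anything of substance: the genuine difficulty lies entirely in the conjecture's content, namely in ruling out odd cycles in minimum-order even-girth cages, and that question is untouched by the present argument.
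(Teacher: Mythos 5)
Your proposal is correct: the statement is an open conjecture, so the paper offers no proof, only the implicit claim that it restates Conjecture \ref{cbipartite}, and your justification of that equivalence (bipartite $\iff \chi \le 2$, upgraded to $\chi = 2$ since an $r$-regular graph with $r \ge 2$ and girth $g \ge 3$ contains a cycle) is exactly the standard reasoning the paper relies on without spelling out. Nothing more is needed, and you are right that this does not touch the substance of the conjecture itself.
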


Clearly, every $(r,g)$-cage with odd $g$ has chromatic number at least $3$. Furthermore, for any $g\geq 3$, any $(r,g)$-graph, independent of whether it is a cage or not, may have chromatic number at least $3$. 

\section{Definitions and background}
The ideas expressed previously motivate  the following definitions: 

\begin{definition}
An $(r,g,\chi)-\textbf{graph}$ is an $r\geq 2$-regular graph of fixed girth $g\geq 3$ and chromatic number equal to $\chi\geq 2$. 
\end{definition} 

\begin{definition}
An $(r,g,\chi)-\textbf{cage}$ is an $(r,g,\chi)$-graph of minimum order, and we denote by $n(r,g,\chi)$ the order of an $(r,g,\chi)$-cage.  
\end{definition}

The idea of this paper is the question of the existence of $(r,g,\chi)$-cages for any $r\geq 2$, $g\geq 3$ and $\chi\geq 2$. Note that if $\chi=k$ this implies that the graph is $k$-partite, but not $(k-1)$-partite so this problem is equivalent to asking what is the smallest $k$-partite $r$-regular graph with fixed girth $g$.  Therefore we can give  the following equivalent definitions: 

\begin{definition}
An $(r,g,k)-\textbf{graph}$ is a $k$-partite (non $(k-1)$-partite) $r$-regular graph, for $r\geq 2$ of fixed girth $g\geq 3$
\end{definition}

\begin{definition}
An $(r,g,k)-\textbf{cage}$ is an $(r,g,k)$-graph of minimum order, and we denote by $n(r,g,k)$ the order of an $(r,g,k)$-cage.  
\end{definition}

We will consider the existence of $(r,g,k)$-cages, or equivalently, the existence of $(r,g,\chi)$-cages.  In our following work we will use the terminology of $(r,g,\chi)$-cage.

By Brooks' Theorem, $(r,g,\chi)$-graphs exist only if $\chi \leq r+1.$  The authors of this paper do not know any result that proves the existence of $(r,g,\chi)$-graphs for $4\leq \chi \leq r+1$.   In Sections 3-5, we focus on the problem for $\chi=3$.  

In Section 3 we determine infinite classes of $(r,3,3)$-cages and we note that in all $3$-colorings of these cages the size of the color classes differ by at most one. This motivates the definition of $(r,g,\chi)$-equitable graph that we study in Section 5.

In Section 4, assuming Conjecture \ref{cbipartite}, we establish
the existence of $(r,g,3)-$graphs for any $r\geq 2$, $g\geq 3$. Notice that, also by the Conjecture \ref{cbipartite}, the $(r,g)$-cages for even $g\geq 4$, are $(r,g,2)$-cages. 



In particular, by Brooks' Theorem we know that every  $(3,g)$-cage of odd girth $g\geq 3$, except $K_4$, has chromatic number 3. Then the $(3,g)$-cages, except $K_4$ which is a $(3,3,4)$-cage, are exactly the $(3,g,3)$-cages.  In this paper we are particularly interested in $(r,g,\chi)$-graphs of odd girth $g$.  


\bigskip

\section{$(r,3,3)$-cages}\label{$(r,3,3)$}

In this section we focus on $(r,3,3)$-graphs and determine $n(r,3,3)$ for $r \geq 2.$  Moreover, we describe an infinite class of $(r,3,3)$-cages for all $r$  and,  based on the parity of $r$, essentially determine all such cages.

In the last case in the proof of Theorem \ref{r33}, we use the following result of Moon and Moser \cite{MM63}.

\begin{theorem}\label{MM63} If $G$ is a balanced bipartite graph with $2n$ vertices having minimum degree $\delta(G) > n/2$, then $G$ is hamiltonian.

\end{theorem}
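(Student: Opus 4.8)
The plan is to argue by contradiction using an extremal (closure-type) reduction followed by a rotation argument adapted to the bipartite setting. Write $A$ and $B$ for the two color classes, each of size $n$ (we may assume $n \ge 2$, as otherwise the statement is degenerate). First I would record that the degree hypothesis implies an Ore-type condition: for any nonadjacent pair $a \in A$, $b \in B$ we have $\deg(a) + \deg(b) \ge 2\delta(G) > n$. Since adding edges only raises degrees, this condition is preserved under edge additions, so I would choose $G$ to be edge-maximal among balanced bipartite graphs on these parts that satisfy $\delta > n/2$ yet fail to be hamiltonian. As $K_{n,n}$ is hamiltonian for $n \ge 2$, such a maximal $G$ is not complete bipartite, so there exist nonadjacent $a \in A$, $b \in B$. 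By maximality, $G + ab$ is hamiltonian, and every hamiltonian cycle of $G+ab$ must use the edge $ab$ (otherwise it would already be a hamiltonian cycle of $G$); deleting $ab$ yields a hamiltonian path $P = v_1 v_2 \cdots v_{2n}$ of $G$ with $v_1 = a$ and $v_{2n} = b$.

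Second, I would exploit the bipartite alternation of $P$. Since $P$ alternates between $A$ and $B$ and starts at $v_1 = a \in A$, the odd positions lie in $A$ and the even positions lie in $B$; in particular $v_{2n} = b \in B$, consistent with $b \in B$. The rotation step seeks an index $i$ with $v_1 v_{i+1} \in E(G)$ and $v_{2n} v_i \in E(G)$: given such an $i$, the cycle $v_1 v_{i+1} v_{i+2} \cdots v_{2n} v_i v_{i-1} \cdots v_2 v_1$ is a hamiltonian cycle of $G$, contradicting the choice of $G$. I would then introduce the index sets $I = \{ i : v_{i+1} \in N(v_1) \}$ and $J = \{ i : v_i \in N(v_{2n}) \}$. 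Because $v_1 \in A$ has all its neighbors in $B$ (even positions) while $v_{2n} \in B$ has all its neighbors in $A$ (odd positions), both $I$ and $J$ consist only of odd indices $i \in \{1, 3, \ldots, 2n-1\}$, a universe of size exactly $n$, with $|I| = \deg(a)$ and $|J| = \deg(b)$.

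Finally, the counting closes the argument: since $I$ and $J$ are subsets of an $n$-element set, $|I \cap J| \ge |I| + |J| - n = \deg(a) + \deg(b) - n > 0$, so some index lies in $I \cap J$, producing the forbidden hamiltonian cycle and the desired contradiction.

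The step I expect to be most delicate, and where the bipartite hypothesis is genuinely used, is the parity bookkeeping of the second paragraph. In the non-bipartite Dirac/Ore setting the analogous index sets live in a universe of size $2n-1$, so one needs a degree sum exceeding $2n-1$; here the restriction to odd positions shrinks the universe to size $n$, which is precisely what lets the weaker bound $\deg(a) + \deg(b) > n$ (equivalently $\delta > n/2$) force an intersection. I would take care to verify that the nonadjacency $ab \notin E(G)$ removes exactly the degenerate endpoints $i = 1$ and $i = 2n-1$ from $J$ and $I$ without affecting the cardinality count, and that the two rotation edges together with the path edges all lie in $G$ itself rather than merely in $G + ab$, so that the contradiction is genuinely with the non-hamiltonicity of $G$.
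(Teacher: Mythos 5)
Your proof is correct, but there is nothing in the paper to compare it against: Theorem \ref{MM63} is quoted as a known result of Moon and Moser \cite{MM63} and the paper gives no proof of it, using it only as a black box in the last case of Theorem \ref{r33}. Your argument is the standard closure-plus-rotation technique (in the spirit of Ore and Bondy--Chv\'atal) specialized to the bipartite setting, and the details check out: the edge-maximality reduction is legitimate because the hypothesis $\delta(G)>n/2$ is monotone under adding edges between the parts and $K_{n,n}$ is hamiltonian for $n\geq 2$; the hamiltonian path $v_1\cdots v_{2n}$ with $v_1=a\in A$, $v_{2n}=b\in B$ alternates parts, so both index sets $I$ and $J$ live inside the $n$ odd positions, with $|I|=\deg(a)$ and $|J|=\deg(b)$; and $|I\cap J|\geq \deg(a)+\deg(b)-n\geq 2\delta(G)-n>0$ produces the crossing pair of edges and hence a hamiltonian cycle of $G$ itself (the degenerate indices $i=1$ and $i=2n-1$ are automatically excluded since $ab\notin E(G)$). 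The shrinking of the index universe from $2n-1$ to $n$ is indeed exactly where bipartiteness buys the weaker threshold $\delta>n/2$ in place of the Dirac bound. Two minor points worth making explicit if you write this up: the theorem as stated fails trivially for $n=1$ (as you note), and in the application within the paper one has $n=(r+1)/2\geq 3$, so this degeneracy is harmless.
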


\begin{theorem}\label{r33}
For $r \geq 2$ we have:

\[n(r,3,3) = \left\{ \begin{array}{cr}
     r  +  r/2  &\mbox{if $r$ is even;} \\
     r +  \lceil r/2 \rceil + 1 &\mbox{if $r$ is odd, and $\lceil r/2 \rceil$ is even;} \\
     r + \lceil r/2 \rceil + 2 &\mbox {if $r$ is odd, and $\lceil r/2 \rceil$ is odd.}
     \end{array} \right. \]
\end{theorem}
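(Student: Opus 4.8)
The plan is to prove matching lower and upper bounds. Throughout, fix a proper $3$-coloring of an $(r,3,3)$-graph $G$ with color classes $A,B,C$ of sizes $a\le b\le c$; since $G$ has girth $3$ it contains a triangle, so all three classes are non-empty. Each class is independent, so every vertex of the largest class $C$ sends all $r$ of its edges into $A\cup B$, giving $a+b\ge r$; as $a+b$ is the smallest of the three pairwise sums this is the binding constraint. From $2c\ge a+b\ge r$ we get $c\ge\lceil r/2\rceil$, hence $n=a+b+c\ge r+\lceil r/2\rceil$. For even $r$ this reads $n\ge 3r/2=r+r/2$, and it is attained by $K_{r/2,r/2,r/2}$, which is $r$-regular, tripartite, and full of triangles; this settles the even case.

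For odd $r$ write $s=\lceil r/2\rceil=(r+1)/2$, so $r=2s-1$ and $r+s=3s-1$. Two extra facts sharpen the bound: (i) $rn$ is even with $r$ odd, so $n$ is even; (ii) equality $n=3s-1$ is impossible. For (ii), $n=3s-1$ forces $a+b=r$ and $c=s$, so $C$ is complete to $A\cup B$; then each vertex of $A$ already has $s$ neighbors in $C$ and hence exactly $s-1$ neighbors in $B$, and symmetrically each vertex of $B$ has $s-1$ neighbors in $A$. Counting the edges between $A$ and $B$ two ways gives $a(s-1)=b(s-1)$, so $a=b$, contradicting that $a+b=2s-1$ is odd. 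Thus $n\ge 3s$. If $s$ is even, $3s=r+\lceil r/2\rceil+1$ is even and this is the claimed value; if $s$ is odd, $3s$ is odd and is excluded by (i), forcing $n\ge 3s+1=r+\lceil r/2\rceil+2$, again the claimed value.

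For the upper bound we exhibit extremal graphs. The even case is done above. For odd $r$ with $s$ even, $n=3s$ is even, so the $2s$-regular graph $K_{s,s,s}$ admits a perfect matching $M$; deleting $M$ yields a $(2s-1)$-regular tripartite graph, and for $s\ge 2$ a triangle survives, so its chromatic number is exactly $3$. For odd $r$ with $s$ odd we take classes of sizes $s+1,s,s$ on $n=3s+1$ vertices and build a $(2s-1)$-regular tripartite graph by deleting from the complete tripartite graph a subgraph whose degrees are $1$ on the class of size $s+1$ and $2$ on each class of size $s$. A short count shows its three bipartite pieces must have the non-negative integral sizes $(s+1)/2,\,(s+1)/2,\,(3s-1)/2$ (here we use that $s$ is odd). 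To show that such a subgraph exists one passes to an auxiliary balanced bipartite graph on the two equal classes, each of size $s$, and extracts a suitable $2$-factor; since this bipartite graph is dense, with minimum degree exceeding half the size of each side, Theorem \ref{MM63} guarantees a Hamilton cycle, which furnishes the factor needed to complete the regularization. For $s\ge 2$ a triangle again survives, giving an $(r,3,3)$-graph on $3s+1$ vertices.

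The only genuinely delicate step is the construction in the last case. Because $3s+1$ is not divisible by $3$ the color classes are unavoidably unbalanced, so the degree correction cannot be carried out class-by-class: any edge removed to fix the large class damages a small class, and one is forced to delete a single coordinated subgraph rather than three independent factors. Guaranteeing that this subgraph exists, equivalently that the associated balanced bipartite graph carries the required $2$-factor, is exactly what Moon--Moser supplies, and this is where I expect the main difficulty to lie; the remaining work is the bookkeeping that checks a triangle survives each deletion so that the girth stays $3$ and $\chi=3$.
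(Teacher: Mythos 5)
Your lower bound is complete and correct, and in fact slightly cleaner than the paper's in the hardest case: the paper rules out $n=3s-1$ by showing the smallest class would be forced to have degree $r+1$, while your edge count between $A$ and $B$ giving $a(s-1)=b(s-1)$, hence $a=b$ against $a+b=2s-1$ odd, is an equally valid route (it needs $s\geq 2$, which holds since $r\geq 3$ is odd). The constructions for $r$ even and for $r$ odd with $s=\lceil r/2\rceil$ even coincide with the paper's, and your deferred ``a triangle survives'' claims are in fact easy to certify by counting: $K_{s,s,s}$ has $s^3$ triangles and a perfect matching destroys at most $3s^2/2$ of them.

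The genuine gap is in the construction for $r$ odd with $s$ odd. You correctly compute that the deleted subgraph $D$ must have $e_{AB}=e_{AC}=(s+1)/2$ and $e_{BC}=(3s-1)/2$, but you never say how $D$ is produced, and the sentence you do give is internally inconsistent with that count: a $2$-factor of the bipartite graph between the two classes of size $s$ would contribute $2s$ edges to $e_{BC}$, not $(3s-1)/2$, and would leave the class of size $s+1$ untouched at degree $2s$. Moreover, Moon--Moser applied to the \emph{complete} bipartite graph $K_{s,s}$ is vacuous; the theorem earns its keep only after a first deletion has already damaged that graph. The missing two-stage argument (which is what the paper does) is: first delete a perfect matching of $K_{s+1,s,s}$ (one exists since $3s+1$ is even and no part exceeds half the order), which brings the large class to degree $r$ and the two small classes to degree $r+1$ while removing only one edge at each vertex; \emph{then} observe that the residual bipartite graph between the two classes of size $s$ has minimum degree at least $s-1>s/2$ for $s\geq 3$ (and $s\geq 3$ holds here, since $s$ odd and $r\geq 5$), so Theorem \ref{MM63} gives a Hamilton cycle, which splits into two perfect matchings, one of which you delete. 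Without specifying this first stage, the existence of $D$ is simply asserted, and the degree-feasibility count you give does not by itself imply existence. Finally, to guarantee the girth stays $3$ you should either run the triangle count for this case as well (the $(5s+1)/2$ deleted edges kill at most $(s+1)(5s+1)/2<(s+1)s^2$ triangles for $s\geq 3$) or, as the paper does, fix a triangle $xyz$ with one vertex in each class and choose both matchings to avoid its three edges.
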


\begin{proof}
We first determine a lower bound on $n(r,3,3)$.  Let $G$ be an $r$-regular graph with $\chi(G) = 3.$  Color $G$ with 3 colors, resulting in color classes $A$, $B$ and $C,$ with $|A| \geq |B| \geq |C|.$  Since $G$ is $r$-regular, $|A| + |B| \geq r, |B| + |C| \geq r$ and $| A| + |C| \geq r.$  Consequently $2|A| + 2|B| + 2|C| \geq 3r,$ that is, $|V(G)| \geq \frac{3r}{2}$ and so $|V(G)| \geq r + \lceil r/2 \rceil. $

In the case $r$ is even, the $(r,3,3)$-graph $ K_{r/2,r/2,r/2} $  attains this bound and so $n(r,3,3) = r + r/2. $

In the case $r$ is odd and $\lceil r/2 \rceil $ is even, it follows that $r + \lceil r/2 \rceil$ is odd and so no $r$- regular graph of order $r + \lceil r/2 \rceil$ exists.  Thus $n(r,3,3) \geq r + \lceil r/2 \rceil + 1.$   The graph $G = K_{\lceil r/2 \rceil, \lceil r/2 \rceil, \lceil r/2 \rceil}$ has order $r + \lceil r/2 \rceil + 1$ which is even and is $(r + 1)$ -regular.  Removing any 1-factor from $G$ results in an $r$-regular graph with girth 3 and chromatic number 3.  Therefore $n(r,3,3) = r + \lceil r/2 \rceil + 1.$

Finally, if $r$ is odd and $\lceil r/2 \rceil $ is odd, it follows that $r + \lceil r/2 \rceil$ is even. Suppose that $|V(G)=r+\lceil r/2 \rceil$, then it follows that $|A| = |B| = \lceil r/2 \rceil$ and $|C| = \lfloor r/2 \rfloor.$ Notice that every vertex in $A$ is adjacent to every vertex in $B \cup C$ and every vertex in $B$ is adjacent to every vertex in $A \cup C.$  But then each vertex in $C$ has degree $r+1$, which contradicts the assumption that $G$ is $r$-regular.  Thus, $|V(G)\leq  r + \lceil r/2 \rceil + 1$, which is odd. Since $G$ is $r$-regular with odd $r$, $|V(G)|$ must be even and so $n(r,3,3)\geq r + \lceil r/2 \rceil + 2$.  We will construct an $(r,3,3)$ graph that attains this bound. Consider the graph $G = K_{(r+3)/2,(r+1)/2,(r+1)/2}$, it has order $r + \lceil r/2 \rceil + 2$, chromatic number 3, and girth 3. Define $A, B, C$ as above, so that every vertex of $A$ has degree $r + 1$ in $G$  and every vertex of $B \cup C$ has degree $r+2$ in $G.$ Select $x\in A$, $y\in B$ and $z \in C.$  We can remove any 1-factor from $G$, that contains none of the edges $xy,yz$ or $xz$ to create a graph $H$ with girth 3 and chromatic number 3.  Furthermore, every vertex of $A$ has degree $r$ in $H$ and every vertex of $B \cup C$ has degree $r + 1$ in $H$.  Moreover, the subgraph $F$ of $H$ induced by $B \cup C$ in $H$ is a spanning subgraph of $K_{(r+1)/2,(r+1)/2}$ in which every vertex has degree at least $(r-1)/2 > (r+1)/4,$ provided $r > 3.$  This is true since $\lceil r/2 \rceil$ is odd.  In this case,  $F$ is hamiltonian (see \cite{MM63}) and so $F$ has a 1-factor that does not contain the edge  $yz$.  We can remove this 1-factor to create an $r$-regular graph with girth 3 and chromatic number 3 of order $r + \lceil r/2 \rceil + 2$. Therefore, $n(r,3,3) = r + \lceil r/2 \rceil + 2.$

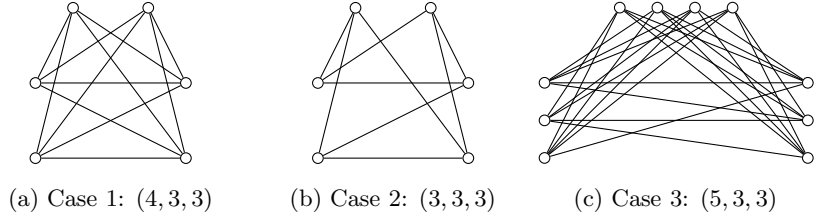
\begin{figure}
\centering
\centering
\begin{subfigure}[c]{.3\textwidth}
\centering
\begin{tikzpicture}  
\node[vert] (x1) at (0,0) {};
\node[vert] (x2) at (2,0) {};
\node[vert] (y1) at (0,1) {};
\node[vert] (y2) at (2,1) {};
\node[vert] (z1) at (.5,2) {};
\node[vert] (z2) at (1.5,2) {};

\foreach \x / \y in {x1/x2,x1/y2, x1/z2, x2/y1, x2/z1,y1/y2,y1/z1,y2/z2, y2/z1,y1/z2,x1/z1,x2/z2}
    \draw (\x) -- (\y);

\end{tikzpicture}\caption{Case 1: $(4,3,3)$}
\end{subfigure}
\begin{subfigure}[c]{.3\textwidth}
\centering
\begin{tikzpicture}  
\node[vert] (x1) at (0,0) {};
\node[vert] (x2) at (2,0) {};
\node[vert] (y1) at (0,1) {};
\node[vert] (y2) at (2,1) {};
\node[vert] (z1) at (.5,2) {};
\node[vert] (z2) at (1.5,2) {};

\foreach \x / \y in {x1/x2,x1/y2,  x2/z1,y1/y2,y1/z1,y2/z2,y1/z2,x1/z1,x2/z2}
    \draw (\x) -- (\y);

\end{tikzpicture}
\caption{Case 2: $(3,3,3)$}
\end{subfigure}
\begin{subfigure}[c]{.3\textwidth}
\centering
\begin{tikzpicture}  
\node[vert] (1) at (0,0) {};
\node[vert] (2) at (.5,0) {};
\node[vert] (3) at (1,0) {};
\node[vert] (4) at (1.5,0) {};

\node[vert] (5) at (-1,-1) {};
\node[vert] (6) at (-1,-1.5) {};
\node[vert] (7) at (-1,-2) {};

\node[vert] (8) at (2.5,-1) {};
\node[vert] (9) at (2.5,-1.5) {};
\node[vert] (10) at (2.5,-2) {};

\foreach \x / \y in {1/6,1/7,1/8,1/9,1/10,
					 2/5,2/7,2/8,2/9,2/10,
					 3/5,3/6,3/7,3/8,3/10,
					 4/5,4/6,4/7,4/9,4/10,
					 5/8,5/9,
					 6/9,6/10,
					 7/8}
    \draw (\x) -- (\y);

\end{tikzpicture}
\caption{Case 3: $(5,3,3)$}
\end{subfigure}
\caption{\label{fig:r33} The cases of $(r,3,3)$-cages}
\end{figure}

\end{proof}

Figure \ref{fig:r33} has examples of the three constructions in the proof of Theorem \ref{r33}. 

We note that this theorem essentially describes the $(r,3,3)$-cages in the first two cases.  Also, in each of the $3$-colorings described in the proof, the sizes of the color classes differ by at most 1.  In Section 5 such colorings will be called equitable colorings.

\section{Existence of $(r,g,3)$-graphs}\label{existence}

In this section, we will prove, assuming  Conjecture \ref{cbipartite}, that  $(r,g,3)$-graphs exist for all $r\geq 2$ and any girth $g\geq 3$. 

\begin{theorem}\label{existencerg3}
The $(r,g,3)$-graphs exist for any $r\geq 2$ and $g\geq 3$. 
\end{theorem}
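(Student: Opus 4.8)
The plan is to reduce the theorem to a single gadget construction and then control the girth by a careful choice of where the gadget is applied. The key reduction is the following: an $r$-regular graph has chromatic number exactly $3$ as soon as it is \emph{$3$-colorable} and \emph{non-bipartite}. So for each $r\ge 2$ and $g\ge 3$ it suffices to exhibit an $r$-regular graph that is $3$-colorable, contains an odd cycle, and has the prescribed girth $g$. The case $r=2$ is immediate: for odd $g$ the cycle $C_g$ works, and for even $g$ the disjoint union $C_g\cup C_{g+1}$ is $2$-regular, has girth $g$, and has chromatic number $3$. From now on I would assume $r\ge 3$, where the engine is a local \emph{edge-swap surgery} performed on a large bipartite $r$-regular graph of controlled girth; the existence of such bipartite hosts of arbitrarily large girth and order is standard (cf.\ \cite{ES63}).

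Now the surgery. Let $B$ be a bipartite $r$-regular graph with parts $X$ and $Y$, and choose two independent edges $u_1v_1,\,u_2v_2$ with $u_1,u_2\in X$ and $v_1,v_2\in Y$. Delete these two edges and add the two edges $u_1u_2$ and $v_1v_2$ (which are new, since $B$ is bipartite). Every vertex loses and gains exactly one incidence, so the result $B'$ is again $r$-regular. Moreover $B'$ is $3$-colorable: color $X$ with color $1$ and $Y$ with color $2$, then recolor $u_2$ and $v_2$ with color $3$. One checks directly that $u_1u_2$ and $v_1v_2$ are now bichromatic, that $u_2$ and $v_2$ are non-adjacent, and that the remaining neighbors of $u_2$ (in $Y$) and of $v_2$ (in $X$) avoid color $3$, so this is a proper coloring. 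Finally $B'$ is non-bipartite: the edge $u_1u_2$ together with any $u_1$--$u_2$ path of $B'$ (necessarily of even length, since $u_1,u_2$ lie on the same side of $B$) closes an odd cycle. Hence $\chi(B')=3$.

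It remains to fix the girth, and here the two parities of $g$ are handled differently. For even $g$, take $B$ to be a bipartite $r$-regular graph of girth exactly $g$; by Conjecture \ref{cbipartite} the $(r,g)$-cage is such a graph, and we may take $B$ as large as we like. Perform the swap with $u_1,u_2,v_1,v_2$ chosen pairwise far apart, so that every cycle through a new edge is longer than $g$; then the even cycles inherited from $B$ still give girth exactly $g$, while the long odd cycle makes $B'$ non-bipartite. (Equivalently, one may leave the cage untouched and take the disjoint union of the cage with a surgered large bipartite graph of girth $>g$.) For odd $g$, no appeal to the Conjecture is needed: take $B$ bipartite $r$-regular of girth $g+1$, and choose $u_1,u_2$ at distance exactly $g-1$ in $B$, so that $u_1u_2$ closes an odd cycle of length exactly $g$; since every cycle of $B$ has length at least $g+1$, the surgered graph $B'$ then has girth exactly $g$.

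The step I expect to be the main obstacle is the girth bookkeeping in this last paragraph: one must verify that the swap creates \emph{no} cycle shorter than $g$ through either new edge or through both new edges simultaneously. This forces a parity-and-distance analysis; in the odd-$g$ case, for instance, one must ensure that the companion edge $v_1v_2$ does not close an odd cycle of length less than $g$, which constrains how $v_1,v_2$ may be chosen relative to $u_1,u_2$. All of these constraints can be met by working inside a sufficiently large bipartite host of the prescribed girth and placing the four surgery vertices with enough room, but carrying this out rigorously is the technical heart of the argument.
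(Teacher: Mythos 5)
Your proof is essentially correct but follows a genuinely different route from the paper's. The paper's odd-girth construction deletes a vertex (for even $r$) or an adjacent pair (for odd $r$) from an $(r,g+1)$-cage and rewires the orphaned neighbours in pairs, shortening a $(g+1)$-cycle to a $g$-cycle; its even-girth construction subdivides a well-chosen edge of an $(r,g)$-cage and restores regularity by identifying $\frac{r}{2}$ (or suitably glued) copies at the new vertex. You instead perform a two-edge swap ($u_1v_1,u_2v_2\mapsto u_1u_2,v_1v_2$) inside a large bipartite $r$-regular host, which handles both parities of $r$ uniformly and, for odd $g$, removes the reliance on Conjecture~\ref{cbipartite} entirely: a bipartite $r$-regular host of girth at least $g+1$ exists unconditionally, whereas the paper must invoke the conjecture to know that the $(r,g+1)$-cage is bipartite. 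Your disjoint-union trick for even $g$ (cage $\sqcup$ surgered large-girth graph) is also cleaner than the paper's subdivide-and-glue argument, at the cost of producing a disconnected witness; the definitions here do not require connectivity, so that is legitimate for an existence statement. The deferred distance bookkeeping does go through: cycles avoiding both new edges have length $\geq g+1$; a cycle through $u_1u_2$ alone has length $\geq 1+d_B(u_1,u_2)=g$; choosing $u_2$ as the penultimate vertex of a geodesic from $u_1$ to a vertex $v_2$ at distance $g$ forces $d_B(v_1,v_2)\geq g-1$, handling the companion edge; and a cycle through both new edges has length $\geq 2+2(g-2)\geq g$. Two small points to tighten: your parenthetical claim that every $u_1$--$u_2$ path of $B'$ has even length is false as stated (a path may traverse $v_1v_2$), though you only need one even $u_1$--$u_2$ path in $B'-\{u_1u_2,v_1v_2\}$, which the geodesic supplies; and for even $g$ you cannot take the $(r,g)$-cage ``as large as we like,'' so you should rely on the disjoint-union variant rather than the in-place surgery there.
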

\begin{proof}
We will divide the proof into two cases depending on the parity of the girth. 
\begin{itemize}
\item{\bf{Case 1:}} Suppose that girth $g\geq 3$ is odd. Clearly a $g$-cycle is a $(2,g,3)$-graph, in fact, it is a $(2,g,3)$-cage and Brooks' Theorem, together with the existence of $(3,g)$-cages, implies the existence of $(3,g,3)$-graphs, or $(3,g,3)$-cages, for odd $g\geq 3$. We now prove the existence of $(r,g,3)$-graphs for $r\geq 4$ and $g\geq 3$ odd. Let $G$ be an $(r,g+1)$-cage and,  as $g$ is odd, by Conjecture \ref{chromaticnumber2}, $G$ is a  $(r,g+1,2)$-cage.

Now we divide the proof in other two sub-cases, depending of the parity of $r$:

\begin{itemize}
    \item {\bf{Case (1a):}} Suppose that $r\geq 4$ is even.  Let $x$ be a vertex in a $(g+1)$- cycle and let $N(x)=\{x_1,x_2,\ldots, x_r\}$, ordering these neighbours with the property that, at least, $\{x_1,x,x_2\}$ is in a $(g+1)$-cycle.  Let $H=G-x$; we construct a new graph $H'$ with $V(H')=V(H)$ and $E(H')=E(H)\cup \{x_i,x_{i+1}\}$ for odd $1\leq i\leq r-1$. As $d_G(x_i,x_j)=2$, for any $1\leq \{i,j\}\leq r $ and $i\not=j$, and $\{x_1,x,x_2\}$ is in a $(g+1)$-cycle in $G$, the edge $\{x_1x_2\}$ is in a $g$-cycle in $H'$, and we obtain an $(r,g)$-graph. 
    As $g$ is odd, the chromatic number of $H'$ is at least $3$.  To finish, we will give a $3$-coloring of $H'$. 
    As $G$ is bipartite, if $f:V(G)\longrightarrow \{0,1\}$ is a $2$-coloring of $G$, it follows that if $f(x)=0$ then $f(N(x))=1$. We give a $3$-coloring of $H'$ as $f'(V(H'))\longrightarrow \{0,1,2\}$ with $f(x_j)=2$ for all even $2\leq j\leq r$ and $f'(z)=f(z)$ in any other case. 
    \item {\bf{Case (1b):}} Suppose that $r\geq 4$ is odd. Take an $xy$-edge, let $N(x)-  \{y\}=\{x_1,x_2,\ldots, x_{r-1}\}$ and $N(y)-\{x\}=\{y_1,y_2,\ldots, y_{r-1}\}$, with the property that also here $\{x_1,x,x_2\}$ is in a $(g+1)$-cycle. As $r$ is odd, $r-1$ is even; let $H=G-\{x,y\}$.  Then $H'$ a graph such that $V(H')=V(H)$ and $E(H')=E(H)\cup \{x_ix_{i+1}, y_iy_{i+1}\}$, for odd $1\leq i\leq r-2$. As in the previous case, the edge $\{x_1x_2\}$ is in a $g$-cycle in $H'$, and we obtain an $(r,g)$-graph. 
    Also, as before, the chromatic number of $H''$ is at least $3$ and we give a $3$-coloring $f'$ of $H'$ taking into account the $2$-coloring of $G$ as before.  Here $f'(V(H'))\longrightarrow \{0,1,2\}$ with $f(\{x_j,y_j\})=2$ for all even $2\leq j\leq r-1$ and $f'(z)=f(z)$ in any other case. 
    \end{itemize}
    In both cases, we obtain an $(r,g,3)$-graph for odd $g\geq 3$.
\item{\bf{Case 2:}} Suppose the girth $g \geq 4$ is even and
let $G$ be an $(r,g)$-cage. By Conjecture \ref{chromaticnumber2}, $G$ is an  $(r,g,2)$-cage. First, we will note that in $G$,  there is no edge $e$ that is in all the cycles of length $g$. To prove this statement we will use the constructions given in {\bf{Case 1}}. Suppose that the statement is false and that all the $g$-cycles in $G$ contain $xy=e$. 
Here, divide the proof into two cases as before, depending on the parity of $r$. As we will use the previous constructions, we will give the proof without many details. Suppose that $r$ is even. Delete the vertex $x$ and, as a consequence, we also delete the edge $e=xy$. By hypothesis, the new graph has girth at least $g+1$, all the vertices have degree $r$ except a set of them (all of degree $r-1$ and mutually at distance $2$). We make the same construction as in {\bf{Case 1a}} and obtain a $r$-regular graph of girth $g$ of order $n(r,g)-1$, which is a contradiction. In the other case, if $r$ is odd, we delete the edge $e=xy$ and repeat the construction given in {\bf{Case 1b}}.   Since we delete the edge $e$, none of the cycles in the new graph has length smaller than $g+1$. Also, in this construction, we only add edges between vertices that, in the original graph, were at distance two and that  were not incident with the edge $e$, that is all of the new cycles have length at least $g+1$. Then, we obtain an $r$-regular graph of girth $g$ and order $n(r,g)-2$, which also is a contradiction. 

Consequently, given an edge $xy$  in $G$ contained in a cycle of length $g$ there exists at least one other cycle of length $g$ that does not contain $xy$. Subdividing the edge $xy$ with a vertex $z$ of degree two, we obtain a graph $H$, such that $V(H)=V(G)\cup \{z\}$ and $E(H)=E(G)\cup \{xz,zy\}$, where $xyz$ is a $2$-path in $H$. As $xy$ was in a $g$-cycle in $G$, now the $xzy$-path is in a $(g+1)$-cycle of odd length (because $g$ is even) in $H$.  Therefore, the chromatic number of $H$ is at least $3$ and if $G$ is colored as before, the coloring of $H$ preserves all the colors of $G$ and adds a new color in $z$. Notice also that, as there exists a $g$-cycle that does not contain $xy$ the girth of the graph is still $g$. 
Here we also give two different constructions depending of the parity of $r$. 
\begin{itemize}
    \item {\bf{Case (2a):}} If $r$ is even, we take $\frac{r}{2}$ copies of $H$ and we identify all the vertices $z$ in each copy. All the vertices have degree $r$ and the graph has girth $g$ because all of these graphs have $g$-cycles that do not include the edge $xy$. 
    \item {\bf{Case (2b):}} If $r$ is odd, first of all take a copy of $H$, called $H'$. Clearly, the chromatic number of $H$ is equal to three, and $H$ inherited the coloring of $G$ except for the new vertex $z$ that has color $2$. Recoloring $H'$ exchanging the chromatic classes of colors $0$ and $2$ of $H$ in $H'$, that is, $H'$ is coloring with colors $1$ and $2$ in all the vertices, except one vertex, called $z'$, that receives the color $0$. Join these two graphs adding the edge $zz'$. Here all the vertices have degree $r$ except $z$ and $z'$ that have degree $3$. Take $\frac{r-3}{2}$ copies of $H$ and $H'$ (preserving their colorings) and identify them in $z$ and in $z'$ respectively. In this new graph called $H''$, all the vertices have degree $r$, the degree of the vertices different than $z$ and $z'$ is clearly $r$ because it does not change and the degree of $z$ (and $z'$) is now $r-3+3=r$. \end{itemize}

In both cases, we obtain $(r,g,3)$-graphs, and we prove their existence. 
\end{itemize}
\end{proof}

As a corollary of this theorem, we have that: 

\begin{corollary}\label{existencerg3cages}
The $(r,g,3)$-cages exist for any $r\geq 2$ and $g\geq 3$. 
\end{corollary}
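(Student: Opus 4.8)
The plan is to deduce the existence of $(r,g,3)$-cages directly from Theorem~\ref{existencerg3} by a standard minimality argument. By Theorem~\ref{existencerg3}, the family $\mathcal{F}_{r,g}$ of all $(r,g,3)$-graphs is nonempty for every $r\geq 2$ and $g\geq 3$. Since every member of $\mathcal{F}_{r,g}$ is a finite graph, each has a well-defined order (number of vertices), so the set
\[
S_{r,g}=\{\,|V(G)| : G\in\mathcal{F}_{r,g}\,\}
\]
is a nonempty subset of the positive integers.

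The key step is then to invoke the well-ordering principle: every nonempty set of positive integers has a least element. Applying this to $S_{r,g}$ yields a minimum value $n(r,g,3)=\min S_{r,g}$, and any $(r,g,3)$-graph attaining this order is, by Definition of an $(r,g,3)$-cage, an $(r,g,3)$-cage. Hence $(r,g,3)$-cages exist for all $r\geq 2$ and $g\geq 3$.

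I do not expect a genuine obstacle here, since the result is a formal consequence of the theorem together with the finiteness of the graphs in question; the only thing to verify is that nonemptiness of the family (guaranteed by Theorem~\ref{existencerg3}) transfers to nonemptiness of the set of orders, which is immediate. One should note that, as with Theorem~\ref{existencerg3}, the statement is conditional on Conjecture~\ref{cbipartite} in the regime where that conjecture was used (namely $r\geq 4$), so the corollary inherits exactly the same hypotheses. If one wished to make the dependence explicit, the cleanest phrasing would be to state that $(r,g,3)$-cages exist whenever the corresponding $(r,g,3)$-graphs do, which is precisely the content established above.
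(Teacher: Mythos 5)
Your proposal is correct and matches the paper's (implicit) reasoning exactly: the paper states the corollary without proof, treating it as an immediate consequence of Theorem~\ref{existencerg3} via the well-ordering principle applied to the orders of the (finite) graphs in the nonempty family. Your additional remark that the corollary inherits the conditional dependence on Conjecture~\ref{cbipartite} is accurate and worth noting.
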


\section{Equitable colorings}\label{equitable}

%






\begin{definition} 

Let $G$ a graph and $\chi$ a vertex coloring of $G$. We say that $\chi$ is an \textbf{equitable coloring} of $G$ is all the chromatic classes differ at most by one.
\end{definition}

\begin{definition} 

An $(r,g,\chi)$- graph $G$ is called an $(r,g,\chi)$-\textbf{equitable graph} if there exits an $\chi$-equitable coloring of $G$. Moreover, $G$ is an $(r,g,\chi)$-\textbf{equitable cage} if $G$ is an $(r,g,\chi)$-equitable graph of minimum order. 
\end{definition}

\begin{remark}
All the $(r,3,3)$-cages constructed in the proof of Theorem \ref{r33} are $(r,3,3)$-equitable cages.
\end{remark}



We next show, assuming Theorem 4.1, that $(r,g,3)$-equitable graphs exist for all $r \geq 2$ and $g \geq 3.$

\begin{theorem}

The $(r,g,3)$-equitable graphs exist for all $r \geq 2$ and $g \geq 3$.

\end{theorem}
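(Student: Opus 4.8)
The plan is to bootstrap from Theorem \ref{existencerg3}, which already supplies a single $(r,g,3)$-graph $G$ for the given parameters; the only property still to be arranged is the \emph{equitability} of a $3$-coloring, and I would obtain it by a balancing trick using permuted disjoint copies. Fix any proper $3$-coloring $f$ of $G$, and let $a,b,c$ be the sizes of the classes of colors $1,2,3$ respectively, so that $a+b+c=|V(G)|=:n$. Take three vertex-disjoint copies $G_1,G_2,G_3$ of $G$ and color $G_i$ by $\sigma_i\circ f$, where $\sigma_1,\sigma_2,\sigma_3$ are the three cyclic permutations $\mathrm{id},(1\,2\,3),(1\,3\,2)$ of the color set $\{1,2,3\}$. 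Let $\widehat{G}=G_1\cup G_2\cup G_3$ be their disjoint union, equipped with the union of these three colorings.

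First I would verify that $\widehat{G}$ is again an $(r,g,3)$-graph. It is $r$-regular and has girth $g$, since both properties are inherited by a disjoint union of $r$-regular graphs of girth $g$. Its chromatic number is exactly $3$: the union coloring shows $\chi(\widehat{G})\le 3$, while $\chi(\widehat{G})\ge\chi(G_1)=3$ because a single component already has chromatic number $3$. Thus $\widehat{G}$ is $3$-partite but not $2$-partite, as required.

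Next I would check equitability. Each $\sigma_i\circ f$ is a relabeling of the proper coloring $f$, hence proper on $G_i$. Across the three copies, color $1$ is carried by the original class $1$ in $G_1$, by the original class $3$ in $G_2$, and by the original class $2$ in $G_3$; that is, by one class of each of the sizes $a,b,c$, for a total of $a+b+c=n$. The same holds for colors $2$ and $3$. Therefore in $\widehat{G}$ every color class has size exactly $n$; the three classes are equal, so in particular differ by at most $1$, and the union coloring is an equitable $3$-coloring. Hence $\widehat{G}$ is an $(r,g,3)$-equitable graph, establishing existence for all $r\ge 2$ and $g\ge 3$.

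I do not anticipate a genuine obstacle: the construction is essentially forced once one notices the counting identity that cyclically permuting the three colors across three copies equalizes the class totals. The only points needing a word of justification are that the disjoint union does not accidentally drop the chromatic number below $3$ (handled by the component bound above) and that each recolored copy remains properly colored (immediate, as color permutations preserve propriety).
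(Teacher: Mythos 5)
Your proposal is correct and is essentially identical to the paper's proof: the paper likewise takes three disjoint copies of an $(r,g,3)$-graph and cyclically rotates the three color classes across the copies so that each resulting class has size $x+y+z$. Your added remarks verifying that the disjoint union keeps chromatic number exactly $3$ and that permuted colorings remain proper are minor elaborations of steps the paper treats as immediate.
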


\begin{proof}

Let $G$ be an $(r,g,3)$-graph and consider a 3-coloring of $G$ with color classes $C_1, C_2, C_3.$  Assume further that $|C_1| = x, |C_2|=y , |C_3| = z.$

Construct a new graph $G^*$ consisting of three disjoint copies $G_1, G_2, G_3$ of $G.$  Then $G^*$ is $r$-regular, $g(G^*)=r,$ and $\chi(G^*) = 3$.  Let $C_{ij}$ denote the copy of $C_i$ in $G_j.$  We give an equitable 3-coloring of $G^*$ as follows.

Define $C_1^* = C_{11} \cup  C_{22} \cup C_{33}, C_2^* = C_{21} \cup C_{32} \cup C_{13} , C_3^* = C_{31} \cup C_{12} \cup C_{23}.$ Then $|C_i^*| = x+y+z$ for $i=1,2,3.$  Thus $C_1^*,C_2^*,C_3^*$ is an equitable 3-coloring of $G^*$, and so $G^*$ is an equitable $(r,g,3)$-graph.

\end{proof}

As a corollary of this theorem, we also have that: 

\begin{corollary}\label{existenceequitable}
The $(r,g,3)$-equitable cages exist for all $r \geq 2$ and $g \geq 3.$ 
\end{corollary}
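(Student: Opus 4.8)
The final statement to prove is Corollary~\ref{existenceequitable}: that $(r,g,3)$-equitable \emph{cages} exist for all $r \geq 2$ and $g \geq 3$. The plan is to derive this as an immediate consequence of the preceding Theorem, which already establishes the existence of at least one $(r,g,3)$-equitable \emph{graph} for every such pair $(r,g)$. The entire content of the corollary is the jump from ``a graph of this type exists'' to ``a graph of this type of minimum order exists,'' so the argument should be a short well-definedness observation rather than a new construction.

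First I would recall the definition: an $(r,g,3)$-equitable cage is an $(r,g,3)$-equitable graph of minimum order. Thus to show a cage exists, it suffices to show that the set of orders of $(r,g,3)$-equitable graphs is a nonempty set of positive integers. Nonemptiness is precisely the statement of the Theorem just proved: the graph $G^*$ constructed there is an $(r,g,3)$-equitable graph, so the order of $G^*$ belongs to this set. Since every such order is a positive integer, the set is a nonempty subset of $\mathbb{N}$, and by the well-ordering principle it has a least element. Any $(r,g,3)$-equitable graph realizing that least order is, by definition, an $(r,g,3)$-equitable cage.

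The structure of the write-up would therefore be: fix arbitrary $r \geq 2$ and $g \geq 3$; invoke the Theorem to produce one $(r,g,3)$-equitable graph and hence a nonempty family $\mathcal{F}$ of such graphs; observe that $\{\,|V(H)| : H \in \mathcal{F}\,\}$ is a nonempty set of positive integers; apply well-ordering to extract the minimum order $n$; and conclude that a member of $\mathcal{F}$ of order $n$ is the desired equitable cage. I would state explicitly that this mirrors the usual existence-to-cage passage used throughout the cage literature (and implicitly in Corollary~\ref{existencerg3cages}), so the reader sees it is the same routine device.

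There is essentially no genuine obstacle here, since all the mathematical work is in the Theorem. The only point requiring any care is making sure the family $\mathcal{F}$ is genuinely nonempty for every $(r,g)$ in range, which is guaranteed because the Theorem's construction $G^*$ works uniformly for all $r \geq 2$ and $g \geq 3$; I would flag this uniformity as the one thing worth checking, but it is already built into the statement we are allowed to assume. Hence the proof is a one-line appeal to well-ordering, and the ``hard part'' is merely presenting it cleanly without overstating the content.
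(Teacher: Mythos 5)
Your argument is correct and is exactly the (implicit) argument in the paper: the paper states this corollary with no written proof, treating the passage from ``an $(r,g,3)$-equitable graph exists'' to ``a minimum-order one exists'' as the standard well-ordering observation you spell out. Nothing is missing.
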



For all odd $g$ every $(3,g)$-cage is a $(3,g,3)$-cage, except $K_4$. The question here is if they have equitable 3-colorings when $g \geq 5$. 

We conjecture that the answer is ``yes" and this is motivated by the fact that many of the known cages, including the  Petersen Graph or the $(3,5)$-cage, the Mc Gee Graph or the $(3,7)$-cage, at least one of the eighteen $(3,9)$-cages (see \cite{BBS95}) and the $(3,11)$-cage (see Figure \ref{fig:(3,9)and(3,11)}) have equitable 3-colorings.

Recall that, by Conjecture \ref{chromaticnumber2} all $(r,g)$-cages of even $g$ are $(r,g,2)$-cages; for this reason, in the following, we will continue working only with cages of odd girth and we leave the study of  $(r,g,\chi)$ with $g$ even as an open problem.

To continue, in the following section we will analyze the question about the existence of $(r,g,\chi)$-cages with odd girth and $r\geq 4$. Moreover, we will prove that for $r=4$, $g=5$ and $\chi=3$, the unique $(4,5)$-cage Robertson graph, see Figure \ref{nonbalancedrobertson},  is the $(4,5,3)$-cage of order 19, but it is not a $(4,5,3)$-equitable cage. We will also exhibit a $(4,5,3)$-equitable cage of order $20$, showing that the question about the minimum order of an $(r,g,\chi)$-equitable graph (or the order of an $(r,g,\chi)$-equitable cage) make sense in general. 

\bigskip


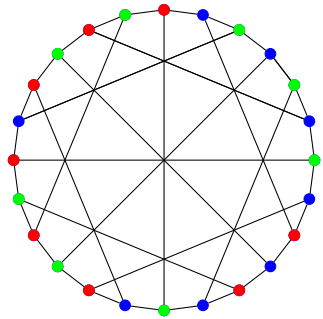
\begin{figure}
\centering
\begin{tikzpicture}[scale=2]  
\foreach \i in {1,...,24}
	{\node[vert,blue] (\i) at ({1*cos(deg(pi*\i /12))},{1*sin(deg(pi*\i/12))}) {};}

\foreach \i in {6,8,10,12,14,16,20,22}
	{\node[vert,red] (\i) at ({1*cos(deg(pi*\i /12))},{1*sin(deg(pi*\i/12))}) {};}
	
\foreach \i in {2,4,7,9,13,15,18,24}
	{\node[vert,green] (\i) at ({1*cos(deg(pi*\i /12))},{1*sin(deg(pi*\i/12))}) {};}

\foreach \x / \y in {1/2,2/3,2/3,3/4,4/5,5/6,6/7,7/8,8/9,9/10,10/11,11/12,12/13,13/14,14/15,15/16,16/17,17/18,18/19,19/20,20/21,21/22,22/23,23/24,1/24,
1/8, 2/19, 3/15,4/11,5/22,6/18,7/14,8/1,9/21,10/17,11/4,12/24,13/20,16/23}
    {
    \draw (\x) -- (\y);
    }
        
\end{tikzpicture}
\caption{\label{fig:balanced} The McGee graph is a $(3,7)$-cage, but it is also a $(3,7,3)$-equitable cage}
\end{figure}

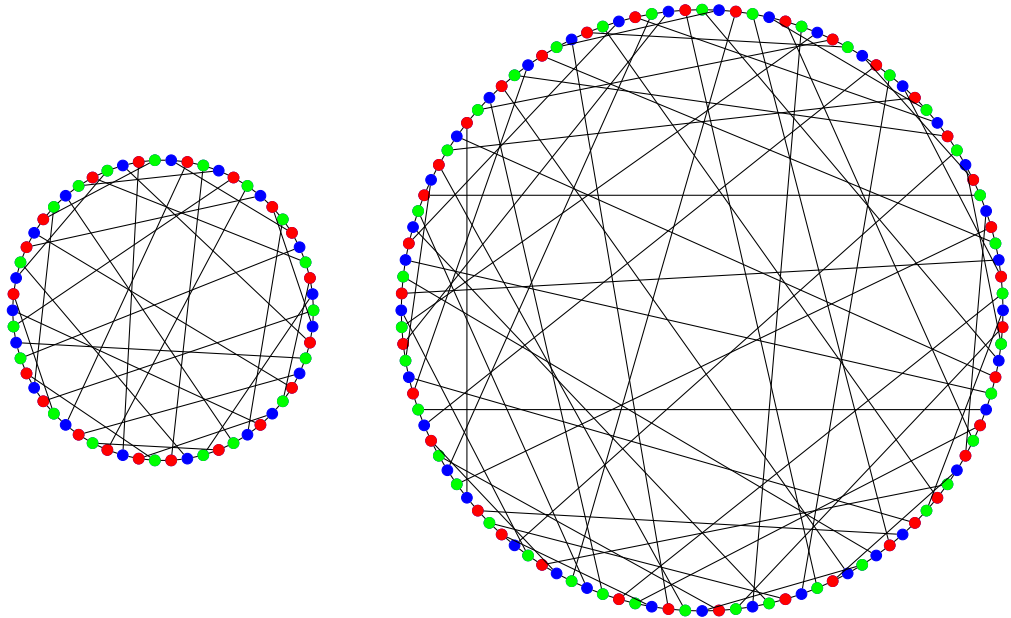
\begin{figure}
\centering
\begin{subfigure}[c]{.49\textwidth}
\centering
\begin{tikzpicture}[scale=2]  
\foreach \i in {1,...,58}
	{\node[vert,blue] (\i) at ({1*cos(deg(2*pi*\i /58))},{1*sin(deg(2*pi*\i/58))}) {};}

\foreach \i in {2,5,7,10,13,16,19,23,25,28,33,35,38,40,42,44,47,50,53,56}
	{\node[vert,red] (\i) at ({1*cos(deg(2*pi*\i /58))},{1*sin(deg(2*pi*\i/58))}) {};}
	
\foreach \i in {3,6,9,12,15,18,20,22,26,30,32,36,39,43,46,48,52,55,58}
	{\node[vert,green] (\i) at ({1*cos(deg(2*pi*\i /58))},{1*sin(deg(2*pi*\i/58))}) {};}

\foreach[remember=\x as \y (initially 1)] \x  in {2,...,58}
   	{
    \draw (\y) -- (\x);
    }
\draw (1) -- (58);

\foreach \x / \y in {1/46,2/52,3/19,4/32,5/14,6/49,7/57,8/25,9/40,10/30,11/20,12/44,13/37,15/23,16/41,17/56,18/27,21/48,22/34,24/53,26/45,28/36,29/50,31/55,33/43,35/58,38/54,39/47,42/51}
    {
    \draw (\x) -- (\y);
    }
        
\end{tikzpicture}
\end{subfigure}
\begin{subfigure}[c]{.49\textwidth}
\centering
\begin{tikzpicture}[scale=4]  
\foreach \i in {1,...,112}
	{\node[vert,blue] (\i) at ({1*cos(deg(2*pi*\i /112))},{1*sin(deg(2*pi*\i/112))}) {};}

\foreach \i in {2,5,8,11,14,17,20,23,26,29,32,35,38,41,44,47,49,52,55,58,61,64,69,71,74,79,82,85,89,92,96,98,100,103,105,108,111}
	{\node[vert,red] (\i) at ({1*cos(deg(2*pi*\i /112))},{1*sin(deg(2*pi*\i/112))}) {};}
	
\foreach \i in {1,4,7,10,13,16,19,22,25,28,31,34,37,40,43,46,50,54,57,59,62,65,67,70,73,76,78,80,83,86,88,91,94,99,101,104,107,110}
	{\node[vert,green] (\i) at ({1*cos(deg(2*pi*\i /112))},{1*sin(deg(2*pi*\i/112))}) {};}

\foreach[remember=\x as \y (initially 1)] \x  in {2,...,112}
   	{
    \draw (\y) -- (\x);
    }
\draw (1) -- (112);

\foreach \x / \y in {1/79,2/15,3/55,4/38,5/67,6/103,7/49,8/18,9/110,10/72,11/40,12/32,13/24,14/46,16/90,17/63,19/35,20/43,21/57,22/87,23/104,25/96,26/76,27/37,29/91,30/58,31/66,33/52,34/100,36/82,39/61,41/93,42/78,44/68,45/108,47/83,48/59,50/77,51/88,53/107,54/95,56/73,60/98,62/106,64/75,65/85,69/97,70/89,71/81,74/101,80/105,84/94,86/111,92/102,99/112,28/109}
    {
    \draw (\x) -- (\y);
    }
        
\end{tikzpicture}
\end{subfigure}
\caption{\label{fig:(3,9)and(3,11)} One of the eighteen $(3,9)$-cages and the unique $(3,11)$-cage are also a $(3,9,3)$- and $(3,11,3)$-equitable cages}
\end{figure}

\section{$(r,g,\chi)$-graphs with odd girth and $r\geq 4$
}




As indicated in the previous section, here we will show that the Robertson Graph, which is a $(4,5,3)$-cage, is not a $(4,5,3)$-equitable cage, that is, it has no equitable 3-coloring.

\begin{theorem}\label{nonbalancedrobertson}
The Robertson Graph is not a $(4,5,3)$-equitable cage. 
\end{theorem}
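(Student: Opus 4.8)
The plan is to reduce the statement to a single, concrete coloring profile. Since the Robertson graph $R$ has $19$ vertices and the only way to split $19$ into three parts that pairwise differ by at most one is $7+6+6$, any equitable $3$-coloring must have color classes of sizes $7,6,6$. Hence it suffices to prove that $R$ admits no proper $3$-coloring with independent classes $A,B,C$ of sizes $|A|=7$, $|B|=|C|=6$. I would assume such a coloring exists and derive a contradiction.

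First I would record the edge count forced by $4$-regularity. Because $R$ has $4\cdot 19/2 = 38$ edges and $A$ is independent, its $7$ vertices send out exactly $4\cdot 7 = 28$ edges, all into $B\cup C$; thus the subgraph $H = R - A$ on the $12$ vertices of $B\cup C$ has exactly $38-28 = 10$ edges. Since $B$ and $C$ are themselves independent, $H$ is precisely the bipartite graph with parts $(B,C)$. Therefore a $7,6,6$ coloring exists if and only if there is an independent $7$-set $A$ for which $H = R-A$ is bipartite and admits a \emph{balanced} $6$-$6$ bipartition, and the whole theorem comes down to ruling this out.

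Next I would convert bipartiteness of $H$ into a covering condition on $A$. As $R$ has girth $5$, if $A$ fails to meet some $5$-cycle then that $5$-cycle survives in $H$ and $H$ is not bipartite; so $H$ bipartite forces $A$ to meet every $5$-cycle (and, more generally, every odd cycle) of $R$. In that situation $H$ has girth at least $6$, and being a $12$-vertex graph with only $10$ edges it is extremely sparse: its cyclomatic number is $10-12+c = c-2$, where $c$ is its number of components, so $H$ is a disjoint union of a few trees together with at most a couple of long even cycles. The core claim is then that no independent $7$-set $A\subseteq V(R)$ simultaneously meets every $5$-cycle of $R$ and leaves behind a balanced bipartite remainder. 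I would establish this by a case analysis over the independent $7$-sets of $R$, cut down drastically using the automorphism group of $R$ (of order $24$) and the explicit adjacencies of the Robertson graph: for most choices of $A$ a $5$-cycle survives, so $H$ is non-bipartite, and for the few choices where $H$ is a forest (or forest-plus-even-cycle) I would check that the forced bipartition of each tree cannot be recombined into two classes of size exactly $6$.

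The main obstacle I anticipate is exactly this last finite check. The girth-$5$ condition already eliminates most candidate sets, since hitting every $5$-cycle with only $7$ mutually non-adjacent vertices is very restrictive, and the order-$24$ automorphism group collapses the surviving possibilities into a small number of essentially distinct cases. The delicate point is the balancing argument: vertices of $H$ all of whose four neighbors lie in $A$ are isolated in $H$ and may be assigned to either part, so they help achieve a $6$-$6$ split and must be tracked carefully when certifying that no balanced bipartition exists. Once every case is excluded, the assumed $7,6,6$ coloring cannot occur, so $R$ has no equitable $3$-coloring and is not a $(4,5,3)$-equitable cage — consistent with the equitable cage instead having order $20$, as constructed later.
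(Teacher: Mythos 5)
Your reduction is sound and is essentially the same strategy the paper uses: since $19=7+6+6$ is the only equitable profile, everything hinges on showing that no independent $7$-set $A$ (which must be a transversal of all odd cycles) leaves a remainder $R-A$ that is bipartite with a balanced $6$--$6$ bipartition. Your edge count (the graph $H=R-A$ has exactly $38-28=10$ edges on $12$ vertices) and the resulting cyclomatic-number observation are correct and give sharper bookkeeping than the paper's own argument, which never makes this count explicit.

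The genuine gap is that the proposal stops exactly where the proof begins: the finite case analysis you defer \emph{is} the theorem. The paper carries it out concretely by decomposing the Robertson graph into four edge-sharing pentagons $\mathcal{C}_5^1,\dots,\mathcal{C}_5^4$ plus two extra adjacent vertices $X=\{x_{18},x_{19}\}$, noting that the $7$-class can meet each pentagon in at most two vertices and $X$ in at most one, and then chasing adjacencies to show that essentially only two configurations of the $7$-class survive; for each of these the colors of the remaining $12$ vertices are completely forced and split $5+7$, not $6+6$. Your plan would need an equivalent enumeration, and two points in it are understated. First, tracking vertices of $H$ that are isolated (all four neighbors in $A$) is not a side issue but the crux of the balancing step, and in fact the paper's analysis shows the bipartition of $H$ is entirely forced along the Hamiltonian cycle, so no such freedom materializes. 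Second, while $|\mathrm{Aut}(R)|=24$ is correct, the group is small relative to the number of independent $7$-sets (the independence number of $R$ is $8$), so the symmetry reduction alone does not collapse the search to a handful of cases; the $5$-cycle-transversal condition must do most of the pruning, which is precisely what the paper's pentagon decomposition accomplishes. Until that enumeration is exhibited, the argument is a correct plan rather than a proof.
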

\begin{proof}

\begin{figure}
\centering
\begin{tikzpicture}[scale=2]  
\foreach \i in {1,...,19}
	{\node[vert,blue] (\i) at ({1*cos(deg(2*pi*\i /19))},{1*sin(deg(2*pi*\i/19))}) {};}

\foreach \i in {2,4,6,8,10,16,18}
	{\node[vert,red] (\i) at ({1*cos(deg(2*pi*\i /19))},{1*sin(deg(2*pi*\i/19))}) {};}
	
\foreach \i in {1,7,12,14,17}
	{\node[vert,green] (\i) at ({1*cos(deg(2*pi*\i /19))},{1*sin(deg(2*pi*\i/19))}) {};}

\foreach[remember=\x as \y (initially 1)] \x  in {2,...,19}
   	{
    \draw (\y) -- (\x);
    }
\draw (1) -- (19);

\foreach \x / \y in {1/5,1/16,2/13,2/9,3/18,3/7,4/15,4/12,6/17,6/13,7/11,8/19,8/15,9/17,10/5,10/14,14/18,16/11,17/9,17/6, 18/14,18/3,19/12}
    {
    \draw (\x) -- (\y);
    }
        
\end{tikzpicture}\label{robertsoncoloring}
\caption{A 3-non equitable coloring for Robertson cage}
\end{figure}
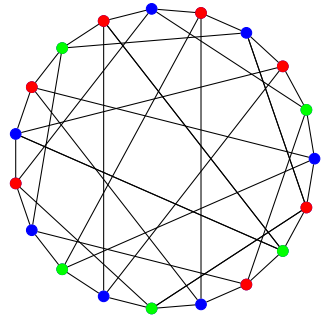

Let $G$ be the Robertson Graph.  In Figure \ref{nonbalancedrobertson} we see that there exists a subgraph of the $G$ that consists of four $5$-cycles and two special vertices marked in the picture. To make it easier to discuss, we will label these $5$-cycles as $\mathcal{C}_5^1=(x_1,\ldots,x_5)$, $\mathcal{C}_5^2=(x_5,\ldots,x_9)$, $\mathcal{C}_5^3=(x_9,\ldots,x_{13})$, $\mathcal{C}_5^4=(x_{13},\ldots,x_{17})$, two special vertices $X=\{x_{18}, x_{19}\}$, and three vertices that are common in two cycles $Y=\{x_5,x_9,x_{13}\}$. 

It is known that $\chi(G)=3$. We also know that there exists a $3$-coloring of Robertson graph with chromatic classes of sizes $\{5,7,7\}$ (see Figure \ref{nonbalancedrobertson}). Notice also that this $3$-coloring is not equitable. 
 If there exists an equitable coloring for the  Robertson Graph it must have chromatic classes of sizes $\{6,6,7\}$.

We note that in any $3$-coloring of the Robertson Graph,  there can be at most two vertices of one chromatic class in each pentagon defined before and at most one element of one chromatic class also in $X$. 

\begin{figure}
\centering
\begin{tikzpicture}[scale=1]  
\usetikzlibrary{calc}
\def\r{2}

\foreach \i in {1,...,19}
	{\node[vert,blue,label=above:{}] (\i) at ({\r*cos(deg(2*pi*\i /19))},{\r*sin(deg(2*pi*\i/19))}) {};}	

\foreach \i in {1,...,19}
	{\node[label=deg(-2*pi*\i/19+2*pi*8 /19):{$x_{\i}$}] (x\i) at ({\r*cos(deg(-2*pi*\i /19+2*pi*8 /19))},{\r*sin(deg(-2*pi*\i/19+2*pi*8 /19))}) {};}

\foreach[remember=\x as \y (initially 1)] \x  in {2,...,19}
   	{
    \draw (\y) -- (\x);
    }
\draw (1) -- (19);

\foreach \x / \y in {1/5,1/16,2/13,2/9,3/18,3/7,4/15,4/12,6/17,6/13,7/11,8/19,8/15,9/17,10/5,10/14,14/18,16/11,17/9,17/6, 18/14,18/3,19/12}
    {
    \draw (\x) -- (\y);
    }

\begin{scope}[rotate=0]
\draw[ultra thick, domain=-3*pi:3*pi,samples=500] plot[domain=-3*pi/19:pi/19+3*2*pi/19] ({deg(\x)}:{2.3}) -- plot[domain=pi/19+3*2*pi/19:-3*pi/19] ({deg(\x)}:{1.7}) -- cycle;
\end{scope}

\begin{scope}[rotate=deg(4*2*pi/19)]
\draw[ultra thick, domain=-3*pi:3*pi,samples=500] plot[domain=-3*pi/19:pi/19+3*2*pi/19] ({deg(\x)}:{2.2}) -- plot[domain=pi/19+3*2*pi/19:-3*pi/19] ({deg(\x)}:{1.8}) -- cycle;
\end{scope}

\begin{scope}[rotate=deg(-4*2*pi/19)]
\draw[ultra thick, domain=-3*pi:3*pi,samples=500] plot[domain=-3*pi/19:pi/19+3*2*pi/19] ({deg(\x)}:{2.2}) -- plot[domain=pi/19+3*2*pi/19:-3*pi/19] ({deg(\x)}:{1.8}) -- cycle;
\end{scope}

\begin{scope}[rotate=deg(-8*2*pi/19)]
\draw[ultra thick, domain=-3*pi:3*pi,samples=500] plot[domain=-3*pi/19:pi/19+3*2*pi/19] ({deg(\x)}:{2.3}) -- plot[domain=pi/19+3*2*pi/19:-3*pi/19] ({deg(\x)}:{1.7}) -- cycle;
\end{scope}

\begin{scope}[rotate=deg(-11.25*2*pi/19)]
\draw[ultra thick, domain=-3*pi:3*pi,samples=500] plot[domain=0:3*pi/19] ({deg(\x)}:{2.2}) -- plot[domain=3*pi/19:0] ({deg(\x)}:{1.8}) -- cycle;
\end{scope}

\end{tikzpicture}
\caption {\label{robertsonlabel} The $(4,5)$-Robertson cage}
\end{figure}
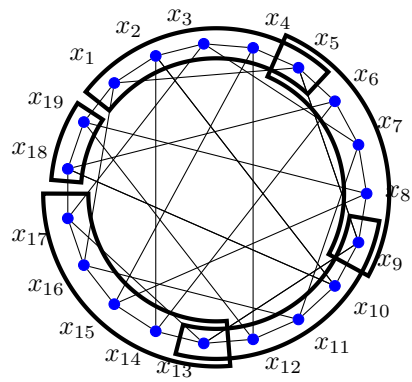

 
Suppose that there exists an equitable $3$-coloring of Robertson Graph, and the chromatic classes of this coloring are given by the sets $\mathcal{R}$, $\mathcal{B}$, $\mathcal{G}$ corresponding to colors $red$, $blue$, $green$, and suppose wlog $|\mathcal{R}|\geq |\mathcal{B}| \geq |\mathcal{G}|$. Then we have that  $|\mathcal{R}|= 7$, and $|\mathcal{B}|=|\mathcal{G}|= 6$.

To start the proof, we will analyze the structure of  $\mathcal{R}$, the chromatic class of order seven.  Recall that there can be at most two vertices of $\mathcal{R}$ in any of the four given pentagons and at most one in $Y$. 
 
We will give some observations that we will prove: 

\begin{itemize}
\item It is not possible that there are two red vertices in two different sets given by $\mathcal{C}_5^i \setminus Y$, for $i\in \{1,2,3,4\}$. 

Notice that if there exist two red vertices in $\mathcal{C}_5^2 \setminus Y$ they must be $\{x_6,x_8\}$ and immediately no element of $X=\{x_{18},x_{19}\}$ can be red. And similarly, if we have two red vertices in $\mathcal{C}_5^3 \setminus Y$ they are  $\{x_{10},x_{12}\}$.

Suppose that we have two red vertices in $\mathcal{C}_5^1 \setminus Y$ and two red vertices in $\mathcal{C}_5^2 \setminus Y$.  In the latter, as mentioned, the red vertices are $\{x_6,x_8\}$. The red vertices in $\mathcal{C}_5^1 \setminus Y$  are either $\{x_1,x_3\}$ or $\{x_2,x_4\}$ .

First, suppose that $\{x_1,x_3\} $ are the red vertices in $ \mathcal{C}_5^1 \setminus Y$. Then we have that: 

$$N(x_1,x_3,x_6,x_8)=\{x_2,x_4,x_5,x_7,x_9,x_{14},x_{15},x_{16},x_{17},x_{18},x_{19}\}.$$

Since the red chromatic class has order $7$ we must have four more red vertices, but they can be neither in $\mathcal{C}_5^4 \setminus Y$ nor in $X$.  So all of them must be in $\mathcal{C}_5^3$ and this is not possible.

Now, suppose that  $\{x_2,x_4\}\in \mathcal{C}_5^1 \setminus Y$ are red.  We have that: 
$$N(x_2,x_4,x_6,x_8)=\{x_1,x_3,x_5,x_7,x_9,x_{10},x_{12},x_{14},x_{15},x_{16},x_{18},x_{19}\}.$$

Here, only  $x_{11},x_{13}$ in $\mathcal{C}_5^3$ can be red. But $x_{11}x_ {17}\in E(G)$ and this implies that no vertex of $\mathcal{C}_5^4$ other than  $x_{13}$ can be red and no  vertices in $X$ are red.  But then there are fewer than 7 red vertices, a contradiction.

A similar analysis must be done if we have two red vertices in $(\mathcal{C}_5^1 \cup \mathcal{C}_5^3) \setminus Y$, $(\mathcal{C}_5^2 \cup \mathcal{C}_5^4) \setminus Y$ or $(\mathcal{C}_5^3 \cup \mathcal{C}_5^4) \setminus Y$. 

Then we only have two cases, where we have two red vertices in $\mathcal{C}_5^1 \setminus Y$ or two red vertices in  $\mathcal{C}_5^4 \setminus Y$. First, if we assume two red vertices in $\mathcal{C}_5^4\setminus Y$ then we have two red vertices in $\mathcal{C}_5^2$ and one of them must be $x_{5}$.  The vertex $x_9$ cannot be red, and we must have one other red vertex in $\mathcal{C}_5^2$ and also only one more in $\mathcal{C}_5^3$ and one in $X$ (see the left side on Figure \ref{robertsonclase7}). For the other case, if we assume two red vertices in $\mathcal{C}_5^1 \setminus Y$, but only one vertex in $\mathcal{C}_5^4$ and it is different than $x_{13}$, then there must be three red vertices in $(\mathcal{C}_5^2 \cup \mathcal{C}_5^3)$ and one of them is $x_9$.  Finally, there is one red vertex in $X$ (see figure right on Figure \ref{robertsonclase7}). 

In the sequel, we will show that the colors of all the vertices in both cases are completely determined. Moreover, we will divide these two cases so that in the first $x_{5}\in \mathcal{R}$ and the rest is exactly the first case above and the second is $x_{9}\in \mathcal{R}$ and the rest is also a consequence (see Figure \ref{robertsonclase7} to understand the colorings). 

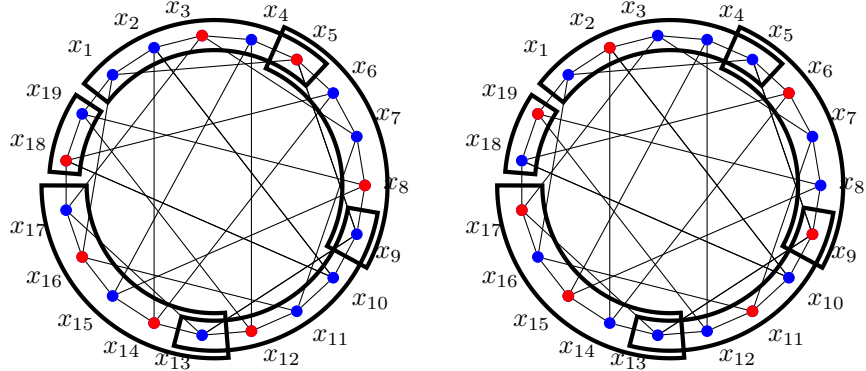
\begin{figure}
\centering
\begin{subfigure}{.49\textwidth}
\centering
\begin{tikzpicture}[scale=1]  
\usetikzlibrary{calc}
\def\r{2}

\foreach \i in {1,...,19}
	{\node[vert,blue,label=above:{}] (\i) at ({\r*cos(deg(2*pi*\i /19))},{\r*sin(deg(2*pi*\i/19))}) {};}	

\foreach \i in {1,...,19}
	{\node[label=deg(-2*pi*\i/19+2*pi*8 /19):{$x_{\i}$}] (x\i) at ({\r*cos(deg(-2*pi*\i /19+2*pi*8 /19))},{\r*sin(deg(-2*pi*\i/19+2*pi*8 /19))}) {};}

\foreach \i in {3,5,9,11,13,15,19}
	{\node[vert,red] (y\i) at ({\r*cos(deg(2*pi*\i /19))},{\r*sin(deg(2*pi*\i/19))}) {};}

\foreach[remember=\x as \y (initially 1)] \x  in {2,...,19}
   	{
    \draw (\y) -- (\x);
    }
\draw (1) -- (19);

\foreach \x / \y in {1/5,1/16,2/13,2/9,3/18,3/7,4/15,4/12,6/17,6/13,7/11,8/19,8/15,9/17,10/5,10/14,14/18,16/11,17/9,17/6, 18/14,18/3,19/12}
    {
    \draw (\x) -- (\y);
    }

\begin{scope}[rotate=0]
\draw[ultra thick, domain=-3*pi:3*pi,samples=500] plot[domain=-3*pi/19:pi/19+3*2*pi/19] ({deg(\x)}:{2.3}) -- plot[domain=pi/19+3*2*pi/19:-3*pi/19] ({deg(\x)}:{1.7}) -- cycle;
\end{scope}

\begin{scope}[rotate=deg(4*2*pi/19)]
\draw[ultra thick, domain=-3*pi:3*pi,samples=500] plot[domain=-3*pi/19:pi/19+3*2*pi/19] ({deg(\x)}:{2.2}) -- plot[domain=pi/19+3*2*pi/19:-3*pi/19] ({deg(\x)}:{1.8}) -- cycle;
\end{scope}

\begin{scope}[rotate=deg(-4*2*pi/19)]
\draw[ultra thick, domain=-3*pi:3*pi,samples=500] plot[domain=-3*pi/19:pi/19+3*2*pi/19] ({deg(\x)}:{2.2}) -- plot[domain=pi/19+3*2*pi/19:-3*pi/19] ({deg(\x)}:{1.8}) -- cycle;
\end{scope}

\begin{scope}[rotate=deg(-8*2*pi/19)]
\draw[ultra thick, domain=-3*pi:3*pi,samples=500] plot[domain=-3*pi/19:pi/19+3*2*pi/19] ({deg(\x)}:{2.3}) -- plot[domain=pi/19+3*2*pi/19:-3*pi/19] ({deg(\x)}:{1.7}) -- cycle;
\end{scope}

\begin{scope}[rotate=deg(-11.25*2*pi/19)]
\draw[ultra thick, domain=-3*pi:3*pi,samples=500] plot[domain=0:3*pi/19] ({deg(\x)}:{2.2}) -- plot[domain=3*pi/19:0] ({deg(\x)}:{1.8}) -- cycle;
\end{scope}

\end{tikzpicture}
\end{subfigure}
\begin{subfigure}{.49\textwidth}
\centering
\begin{tikzpicture}[scale=1]  
\usetikzlibrary{calc}
\def\r{2}

\foreach \i in {1,...,19}
	{\node[vert,blue,label=above:{}] (\i) at ({\r*cos(deg(2*pi*\i /19))},{\r*sin(deg(2*pi*\i/19))}) {};}	

\foreach \i in {1,...,19}
	{\node[label=deg(-2*pi*\i/19+2*pi*8 /19):{$x_{\i}$}] (x\i) at ({\r*cos(deg(-2*pi*\i /19+2*pi*8 /19))},{\r*sin(deg(-2*pi*\i/19+2*pi*8 /19))}) {};}

\foreach \i in {2,6,8,10,12,16,18}
	{\node[vert,red] (y\i) at ({\r*cos(deg(2*pi*\i /19))},{\r*sin(deg(2*pi*\i/19))}) {};}

\foreach[remember=\x as \y (initially 1)] \x  in {2,...,19}
   	{
    \draw (\y) -- (\x);
    }
\draw (1) -- (19);

\foreach \x / \y in {1/5,1/16,2/13,2/9,3/18,3/7,4/15,4/12,6/17,6/13,7/11,8/19,8/15,9/17,10/5,10/14,14/18,16/11,17/9,17/6, 18/14,18/3,19/12}
    {
    \draw (\x) -- (\y);
    }

%
%
%
%
%

\begin{scope}[rotate=0]
\draw[ultra thick, domain=-3*pi:3*pi,samples=500] plot[domain=-3*pi/19:pi/19+3*2*pi/19] ({deg(\x)}:{2.3}) -- plot[domain=pi/19+3*2*pi/19:-3*pi/19] ({deg(\x)}:{1.7}) -- cycle;
\end{scope}

\begin{scope}[rotate=deg(4*2*pi/19)]
\draw[ultra thick, domain=-3*pi:3*pi,samples=500] plot[domain=-3*pi/19:pi/19+3*2*pi/19] ({deg(\x)}:{2.2}) -- plot[domain=pi/19+3*2*pi/19:-3*pi/19] ({deg(\x)}:{1.8}) -- cycle;
\end{scope}

\begin{scope}[rotate=deg(-4*2*pi/19)]
\draw[ultra thick, domain=-3*pi:3*pi,samples=500] plot[domain=-3*pi/19:pi/19+3*2*pi/19] ({deg(\x)}:{2.2}) -- plot[domain=pi/19+3*2*pi/19:-3*pi/19] ({deg(\x)}:{1.8}) -- cycle;
\end{scope}

\begin{scope}[rotate=deg(-8*2*pi/19)]
\draw[ultra thick, domain=-3*pi:3*pi,samples=500] plot[domain=-3*pi/19:pi/19+3*2*pi/19] ({deg(\x)}:{2.3}) -- plot[domain=pi/19+3*2*pi/19:-3*pi/19] ({deg(\x)}:{1.7}) -- cycle;
\end{scope}

\begin{scope}[rotate=deg(-11.25*2*pi/19)]
\draw[ultra thick, domain=-3*pi:3*pi,samples=500] plot[domain=0:3*pi/19] ({deg(\x)}:{2.2}) -- plot[domain=3*pi/19:0] ({deg(\x)}:{1.8}) -- cycle;
\end{scope}      
\end{tikzpicture}
\end{subfigure}
\caption {\label{robertsonclase7} Two different cases of a red chromatic class of size 7 in Robertson cage}
\end{figure}

\item In the following we will determine the coloring divided into two cases, when $x_5$ is red and when $x_9$ is red. Moreover in both of the cases, we will prove that it is not possible that the two chromatic classes $\mathcal{B}$ and $\mathcal{G}$  both have size $6$. In fact in both cases one of them has size $5$ and the other $7$. We conclude that it is not possible to have an equitable coloring of the Robertson Graph. 

\begin{itemize}

\item Suppose that $x_5$ is red and that $x_3$ is also red. Since $N(x_3,x_5)=\{x_2,x_4,x_6,x_7,x_9,x_{17}\}$, then the other red vertex in $\mathcal{C}_5^2$ must be $x_8$, and $$N(x_3,x_5,x_8)=\{x_2,x_4,x_6,x_7,x_9,x_{15},x_{17},x_{19}\}.$$
Since in $ \mathcal{C}_5^2\setminus Y$ there must be  two red vertices, the unique options are $x_{14}$ and $x_{16}$, and in $X$ the vertex $x_{18}$ must be red. 
In this case, we have that: 
 $$N(x_3,x_5,x_8,x_{14},x_{16},x_{18})=\{x_1,x_2,x_4,x_6,x_7,x_9,x_{10},x_{11},x_{13},x_{15},x_{17},x_{19}\}$$.
 
 Finally,  we have one red vertex in $ \mathcal{C}_5^3\setminus Y$ and it must be $x_{12}$, and we finish the description of the red vertices in the coloring (see Figure \ref{robertsonclase7}, left side).

 Now suppose that $x_5$ is red, but in this case $x_2$ is the other red vertex. Then $N(x_2,x_5)=\{x_1,x_3,x_4,x_6,x_9,x_{10},x_{14}\}$. There must be two red vertices in $ \mathcal{C}_5^4\setminus Y$ and these must be $\{x_{15},x_{17}\}$. Since $$N(x_2,x_5,x_{15},x_{17})=\{x_1,x_3,x_4,x_6,x_8,x_9,x_{10},x_{13},x_{14},x_{16},x_{18}\},$$ then the vertex $x_7$  in $\mathcal{C}_5^2\setminus Y$ must be red. But then we cannot have a red $x_{11}$. However, if  $x_{12}$ is red, then we have a problem because $x_{12}x_{19}\in E(G)$ and no vertex of $X$ can be red.  Consequently, this case does not have a solution, that is,  a chromatic class of size $7$ with $\{x_2,x_5\}$ colored red does not exist. 
 
 A similar analysis can be done if we assume $x_{13}$ instead of $x_{5}$ is red and another vertex in $\mathcal{C}_5^4$, and two different vertices in $ \mathcal{C}_5^1\setminus Y$.

 To finalize this case notice that between any two red vertices on the hamiltonian cycle $C: x_1, x_2, ... , x_{19}, x_1$, we have vertices of different chromatic classes and the number of vertices in each gap between any pair of red vertices are $\{1,2,3,1,1,1,3\}$ if we start after $x_3$ in the counter-clock direction.  So if we want a coloring with two chromatic classes of size $6$, it is necessary that if the colors are balanced in the spaces with one vertex (2 green and 2 blue), one of the spaces with three vertices, has two blue vertices and one green vertex and the other has two green vertices and one blue.  If $x_4$ is blue, then $x_5$ is green.  If $x_{13}\in \mathcal{B}$ then  $x_{17}\in \mathcal{G}$. Moreover if $x_7\in \mathcal{B}$ then $x_6\in \mathcal{G}$ and $x_{11}\in \mathcal{G}$, and also $x_9\in \mathcal{G}$ and $x_{10}\in \mathcal{B}$, but $x_2$ should be also green and also $x_{19}$ and $x_1$ is blue (see Figure \ref{robertsonredgreenblue} left side)
Then the green chromatic class has size  $7$ and the blue chromatic class has size $5$, and there does not exist a balanced coloring of the Robertson Graph with these characteristics in the red chromatic class.

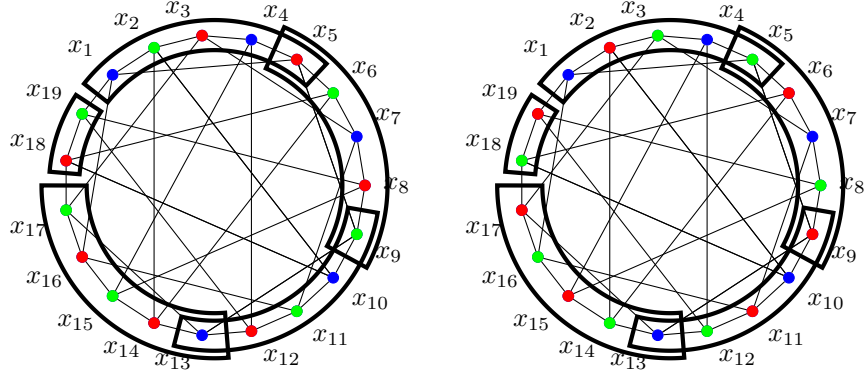
\begin{figure}
\centering
\begin{subfigure}{.49\textwidth}
\centering
\begin{tikzpicture}[scale=1]  
\usetikzlibrary{calc}
\def\r{2}

\foreach \i in {1,...,19}
	{\node[vert,blue,label=above:{}] (\i) at ({\r*cos(deg(2*pi*\i /19))},{\r*sin(deg(2*pi*\i/19))}) {};}	

\foreach \i in {1,...,19}
	{\node[label=deg(-2*pi*\i/19+2*pi*8 /19):{$x_{\i}$}] (x\i) at ({\r*cos(deg(-2*pi*\i /19+2*pi*8 /19))},{\r*sin(deg(-2*pi*\i/19+2*pi*8 /19))}) {};}

\foreach \i in {3,5,9,11,13,15,19}
	{\node[vert,red] (y\i) at ({\r*cos(deg(2*pi*\i /19))},{\r*sin(deg(2*pi*\i/19))}) {};}
	
\foreach \i in {2,6,8,10,12,16,18}
	{\node[vert,green] (y\i) at ({\r*cos(deg(2*pi*\i /19))},{\r*sin(deg(2*pi*\i/19))}) {};}

\foreach[remember=\x as \y (initially 1)] \x  in {2,...,19}
   	{
    \draw (\y) -- (\x);
    }
\draw (1) -- (19);

\foreach \x / \y in {1/5,1/16,2/13,2/9,3/18,3/7,4/15,4/12,6/17,6/13,7/11,8/19,8/15,9/17,10/5,10/14,14/18,16/11,17/9,17/6, 18/14,18/3,19/12}
    {
    \draw (\x) -- (\y);
    }

\begin{scope}[rotate=0]
\draw[ultra thick, domain=-3*pi:3*pi,samples=500] plot[domain=-3*pi/19:pi/19+3*2*pi/19] ({deg(\x)}:{2.3}) -- plot[domain=pi/19+3*2*pi/19:-3*pi/19] ({deg(\x)}:{1.7}) -- cycle;
\end{scope}

\begin{scope}[rotate=deg(4*2*pi/19)]
\draw[ultra thick, domain=-3*pi:3*pi,samples=500] plot[domain=-3*pi/19:pi/19+3*2*pi/19] ({deg(\x)}:{2.2}) -- plot[domain=pi/19+3*2*pi/19:-3*pi/19] ({deg(\x)}:{1.8}) -- cycle;
\end{scope}

\begin{scope}[rotate=deg(-4*2*pi/19)]
\draw[ultra thick, domain=-3*pi:3*pi,samples=500] plot[domain=-3*pi/19:pi/19+3*2*pi/19] ({deg(\x)}:{2.2}) -- plot[domain=pi/19+3*2*pi/19:-3*pi/19] ({deg(\x)}:{1.8}) -- cycle;
\end{scope}

\begin{scope}[rotate=deg(-8*2*pi/19)]
\draw[ultra thick, domain=-3*pi:3*pi,samples=500] plot[domain=-3*pi/19:pi/19+3*2*pi/19] ({deg(\x)}:{2.3}) -- plot[domain=pi/19+3*2*pi/19:-3*pi/19] ({deg(\x)}:{1.7}) -- cycle;
\end{scope}

\begin{scope}[rotate=deg(-11.25*2*pi/19)]
\draw[ultra thick, domain=-3*pi:3*pi,samples=500] plot[domain=0:3*pi/19] ({deg(\x)}:{2.2}) -- plot[domain=3*pi/19:0] ({deg(\x)}:{1.8}) -- cycle;
\end{scope}

\end{tikzpicture}
\end{subfigure}
\begin{subfigure}{.49\textwidth}
\centering
\begin{tikzpicture}[scale=1]  
\usetikzlibrary{calc}
\def\r{2}

\foreach \i in {1,...,19}
	{\node[vert,blue,label=above:{}] (\i) at ({\r*cos(deg(2*pi*\i /19))},{\r*sin(deg(2*pi*\i/19))}) {};}	

\foreach \i in {1,...,19}
	{\node[label=deg(-2*pi*\i/19+2*pi*8 /19):{$x_{\i}$}] (x\i) at ({\r*cos(deg(-2*pi*\i /19+2*pi*8 /19))},{\r*sin(deg(-2*pi*\i/19+2*pi*8 /19))}) {};}

\foreach \i in {2,6,8,10,12,16,18}
	{\node[vert,red] (y\i) at ({\r*cos(deg(2*pi*\i /19))},{\r*sin(deg(2*pi*\i/19))}) {};}
	
\foreach \i in {3,5,9,11,13,15,19}
	{\node[vert,green] (y\i) at ({\r*cos(deg(2*pi*\i /19))},{\r*sin(deg(2*pi*\i/19))}) {};}

\foreach[remember=\x as \y (initially 1)] \x  in {2,...,19}
   	{
    \draw (\y) -- (\x);
    }
\draw (1) -- (19);

\foreach \x / \y in {1/5,1/16,2/13,2/9,3/18,3/7,4/15,4/12,6/17,6/13,7/11,8/19,8/15,9/17,10/5,10/14,14/18,16/11,17/9,17/6, 18/14,18/3,19/12}
    {
    \draw (\x) -- (\y);
    }

%
%
%
%
%

\begin{scope}[rotate=0]
\draw[ultra thick, domain=-3*pi:3*pi,samples=500] plot[domain=-3*pi/19:pi/19+3*2*pi/19] ({deg(\x)}:{2.3}) -- plot[domain=pi/19+3*2*pi/19:-3*pi/19] ({deg(\x)}:{1.7}) -- cycle;
\end{scope}

\begin{scope}[rotate=deg(4*2*pi/19)]
\draw[ultra thick, domain=-3*pi:3*pi,samples=500] plot[domain=-3*pi/19:pi/19+3*2*pi/19] ({deg(\x)}:{2.2}) -- plot[domain=pi/19+3*2*pi/19:-3*pi/19] ({deg(\x)}:{1.8}) -- cycle;
\end{scope}

\begin{scope}[rotate=deg(-4*2*pi/19)]
\draw[ultra thick, domain=-3*pi:3*pi,samples=500] plot[domain=-3*pi/19:pi/19+3*2*pi/19] ({deg(\x)}:{2.2}) -- plot[domain=pi/19+3*2*pi/19:-3*pi/19] ({deg(\x)}:{1.8}) -- cycle;
\end{scope}

\begin{scope}[rotate=deg(-8*2*pi/19)]
\draw[ultra thick, domain=-3*pi:3*pi,samples=500] plot[domain=-3*pi/19:pi/19+3*2*pi/19] ({deg(\x)}:{2.3}) -- plot[domain=pi/19+3*2*pi/19:-3*pi/19] ({deg(\x)}:{1.7}) -- cycle;
\end{scope}

\begin{scope}[rotate=deg(-11.25*2*pi/19)]
\draw[ultra thick, domain=-3*pi:3*pi,samples=500] plot[domain=0:3*pi/19] ({deg(\x)}:{2.2}) -- plot[domain=3*pi/19:0] ({deg(\x)}:{1.8}) -- cycle;
\end{scope}      
\end{tikzpicture}
\end{subfigure}
\caption {\label{robertsonredgreenblue} Two different cases of a 3-non equitable coloring of Robertson cage}
\end{figure}

\item Suppose that $x_9$ is red, and there is  one other red vertex in $\mathcal{C}_5^2$. In this first case we assume $x_7$ is also red so that the unique possible red vertex in $\mathcal{C}_5^3$ is $x_{12}$, and we have the following $N(x_7,x_9,x_{12})=\{x_3,x_4,x_5,x_6,x_8,x_{10},x_{11},x_{13},x_{19}\}$. As an immediate conclusion,
$x_{18}$ must be a red vertex in $X$
and we must have
two red vertices in $\mathcal{C}_5^4\setminus Y$, and the options are only $x_{14}$ and $x_{16}$, because $x_{17}\in N(x_{18})$. Then,at this point, we have: 
$$N(x_7,x_9,x_{12},x_{14},x_{16},x_{18})=\{x_1,x_2,x_3,x_4,x_5,x_6,x_8,x_{10},x_{11},x_{13},x_{15},x_{17},x_{19}\}$$ and there can be no red vertex in $\mathcal{C}_5^1\setminus Y$ and this case does not have solution.

The second case is, assuming $x_9$ is red, that $x_6$ is the other red vertex in $\mathcal{C}_5^2$.  As, $x_6x_{18}\in E(G)$ then $x_{19}\in X\cap \mathcal{R}$ and 
$N(x_6,x_9,x_{19})=\{x_1,x_5,x_7,x_8,x_{10},x_{12},x_{13},x_{14},x_{18}\}$.  Then, the other red vertex in $\mathcal{C}_5^3$ only can be $x_{11}$, and the two red vertices in $ \mathcal{C}_5^4\setminus Y$ are $x_{15}$ and $x_{17}$. Then: 
$$N(x_6,x_9x_{11},x_{15},x_{17},x_{19})=\{x_1,x_3,x_4,x_5,x_7,x_8,x_{10},x_{12},x_{13},x_{14},x_{16},x_{18}\}.$$
Consequently, the red vertex in $\mathcal{C}_5^1$ must be $x_2$ and everything is determined (see Figure \ref{robertsonredgreenblue} right side). A similar analysis can be done if we begin with  $x_9$ as red,  consider firstly the second red vertex in $\mathcal{C}_5^3$ instead of starting with $\mathcal{C}_5^3$, then we have two options and only one will be possible.

To finalize we will prove, as in the previous case, that it will be impossible to find an equitable coloring with this distribution of the red chromatic class in $G$. Here the number of vertices in each gap between any pair of red vertices are $\{1,3,2,1,3,1,1\}$ starting in $x_1$ in the counter-clock direction (the same as in the previous case) and also here if we want a coloring with two chromatic classes of length $6$ it is necessary that if we balanced the colors in the spaces with one vertex (2 green and 2 blue), one of the spaces with three vertices, has two blue vertices and one green vertex and the other should have two green vertices and one blue.  Here, if $x_1\in \mathcal{B}$, then $\{x_5,x_7\}\in \mathcal{G}$ and $x_4\in \mathcal{B}$  and $x_3\in \mathcal{G}$. Immediately $\{x_{12},x_{14}\}\in \mathcal{G}$ and  $x_{15}\in \mathcal{B}$. But, as $x_{10}x_{18}\in E(G)$ and both are vertices between red vertices, one of them is blue and the other green (w.l.o.g. $x_{10}\in \mathcal{B}$ and $x_{18}\in \mathcal{G}$). Finally,  $x_7\in \mathcal{B}$ and $x_8\in \mathcal{G}$ (see Figure \ref{robertsonredgreenblue}, right side).

\medskip

Then, newly the green chromatic class has size also $7$ and the blue chromatic class has size $5$, and neither in this case, does not there exist a balanced coloring of Robertson Graph.

\end{itemize}
\end{itemize}

\end{proof}

On Figure \ref{fig:453} is a $(4,5,3)$-equitable graph of order $20$ and three chromatic classes of size $6$, $7$ and $7$. As the $(4,5,3)$-cage graph (the Robertson graph) is not equitable, then $G$ is a $(4,5,3)$-equitable cage. 

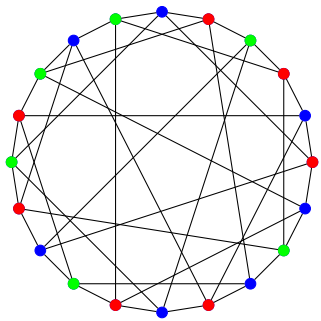
\begin{figure}
\centering
\begin{tikzpicture}[scale=2]  
\foreach \i in {1,...,20}
	{\node[vert, blue] (\i) at ({1*cos(deg(2*pi*\i /20))},{1*sin(deg(2*pi*\i/20))}) {};}
	
\foreach \i in {16, 2, 9, 20, 4, 14, 11}
	{\node[vert, red] (\i) at ({1*cos(deg(2*pi*\i /20))},{1*sin(deg(2*pi*\i/20))}) {};}

\foreach \i in {3, 10, 13, 18, 6, 8}
	{\node[vert, green] (\i) at ({1*cos(deg(2*pi*\i /20))},{1*sin(deg(2*pi*\i/20))}) {};}	

\foreach \x / \y in {16/1,1/20,16/15,16/7,16/17,1/9,1/2,20/5,20/12,20/19,15/3,15/10,15/14,7/8,7/11,7/6,17/4,17/18,17/13,9/8,9/10,9/13,2/3,2/18,2/6,5/4,5/10,5/6,12/3,12/11,12/13,19/8,19/18,19/14,8/4,4/3,10/11,11/18,13/14,14/6}
    {
    \draw (\x) -- (\y);
    }

\end{tikzpicture}
\caption{\label{fig:453} The $(4,5,3)$-equitable cage}
\end{figure}

\section{Future work.}

Obviously,  natural future research is working with regular graphs of odd girth and a chromatic number greater or equal to $4$, because in this paper we only worked with graphs of chromatic number equal to $3$.

For example, for girth $5$ the first cage that has chromatic number equal to $4$ is the $(5,5)$-cage which has order $30$.   Obviously, it is a $(5,5,4)$-cage, but we do not know if it is equitable or not. 
Moreover, there exist a lot of nice questions about the chromatic number of the cages, that is, about the chromatic number of the minimum $(k;g)$-graphs known until this moment (for any $k\geq 3$ and odd $g$).

On the other hand, when $g$ is even and $\chi\not= 2$, the problem of finding  $(r,g,\chi)$-graphs, or $(r,g,\chi)$-cages, is naturally related to the problem of finding cages with given girth pair. These graphs were introduced in \cite{HK83} and studied, for example in \cite{BS14,C97}. The $(r;g,h)$-\textbf{cages} are $r$-regular graphs of minimum order and fixed girth $g$, where $h$ is the length of the smallest cycle of different parity of $g$ and $g<h$. Obviously, if $g$ is even, an $(r;g,h)$-graph $G$ (in this context) could be an $(r,g,3)$-graph (in our context), if the chromatic number of $G$ is $3$.  For us, a natural future research question is to study the relationship between these two ``kinds" of cages.

\bibliographystyle{alpha}

\end{document}